\newcommand{\href}[1]{#1} % does nothing, but defines the command so the
\newtheorem{thm}{Theorem}[section]
\newtheorem*{thm*}{Theorem}
\newtheorem{lemma}[thm]{Lemma}
\newtheorem*{lemma*}{Lemma}
\newtheorem{prop}[thm]{Proposition}
\newtheorem*{prop*}{Proposition}
\theoremstyle{definition}
\newtheorem{df}[thm]{Definition}
\newtheorem{ex}[thm]{Example}
\newtheorem{rmk}[thm]{Remark}
\newtheorem{obs}[thm]{Observation}
\theoremstyle{remark}
\newcommand{\bbQ}{\mathbb{Q}}
\newcommand{\bbC}{\mathbb{C}}
\newcommand{\bbP}{\mathbb{P}}
\newcommand{\bbA}{\mathbb{A}}
\newcommand{\bbR}{\mathbb{R}}
\newcommand{\bbI}{\mathbb{I}}
\newcommand{\Z}{\mathbb{Z}}
\newcommand{\QQ}{\bbQ}
\newcommand{\Q}{\bbQ}
\newcommand{\C}{\bbC}
\renewcommand{\P}{\bbP}
\newcommand{\M}{\mathrm{M}}
\newcommand{\cO}{\mathcal{O}}
\newcommand{\cH}{\mathcal{H}}
\newcommand{\cM}{\mathcal{M}}
\newcommand{\sD}{{\mathscr{D}}}
\newcommand{\fp}{\mathfrak p}
\DeclareMathOperator{\Gal}{Gal}
\DeclareMathOperator{\GL}{GL}
\DeclareMathOperator{\SL}{SL}
\DeclareMathOperator{\PSL}{PSL}
\DeclareMathOperator{\SO}{SO}
\DeclareMathOperator{\Ort}{O}
\DeclareMathOperator{\sn}{sn}
\DeclareMathOperator{\No}{N}
\DeclareMathOperator{\Div}{Div}
\newcommand{\id}{\ensuremath \text{Id}}
\newcommand{\mtx}[4]{\left(\begin{matrix}#1&#2\\#3&#4\end{matrix}\right)}
\newcommand{\smtx}[4]{\left(\begin{smallmatrix}#1&#2\\#3&#4\end{smallmatrix}\right)}
\newcommand{\emphh}[2][ ]{%
\ifthenelse{\equal{#1}{ }}{\index{default}{#2}{\emph{#2}}}{\index{default}{#1@#2}{\emph{#2}}}%
}
\newcommand{\emphhh}[3][ ]{%
\ifthenelse{\equal{#1}{ }}{\index{default}{#2}{\emph{#3}}}{\index{default}{#1@#2}{\emph{#3}}}%
}
\def\sumprime{\mathop{\sum{\raise3pt\hbox{${}'$}}}} %Copied from somewhere else
\def\prodprime{\mathop{\prod{\raise3pt\hbox{${}'$}}}}
\renewcommand{\epsilon}{\varepsilon}
\newcommand{\tto}[1]{%
\ifthenelse{\equal{#1}{}}{\to}{\stackrel{#1}{\longrightarrow}}}
\title[Evaluation of Bianchi rigid meromorphic cocycles at big ATR points]{Evaluation of Bianchi rigid meromorphic cocycles\\ at big ATR points}
\author{Lennart Gehrmann}
\address{Universität Bielefeld, Germany}
\email{gehrmann.math@gmail.com}
\author{Xavier Guitart}
\address{Universitat de Barcelona and CRM, Catalonia}
\email{xevi.guitart@gmail.com}
\author{Marc Masdeu}
\address{Universitat Autònoma de Barcelona and CRM, Catalonia}
\email{marc.masdeu@uab.cat}
\begin{document}
\begin{abstract}
  We develop the tools required to effectively evaluate the Bianchi rigid meromorphic cocycles introduced by Darmon--Gehrmann--Lipnowski at big ATR points, and use them to obtain the first numerical verification of the conjectured algebraicity of these special values.
Moreover, our computations suggest that these special values exhibit behaviour analogous to that of the special values of Borcherds products on Hilbert modular surfaces at big CM points.
	
\end{abstract}
\maketitle

\renewcommand{\emphh}[1]{{\emph{#1}{}}}

\newenvironment{mytable}[2]
{\begin{tabular}{|*{#1}{l|}}
\toprule
\multicolumn{#1}{c}{#2}\\
\midrule}
{\bottomrule\end{tabular}}

\section{Introduction} 

Singular moduli are the values of the classical $j$-function at CM points on the modular curve $X_0(1)$. These complex numbers are in fact algebraic and generate abelian extensions of the corresponding imaginary quadratic fields, and their norms down to $\Q$ exhibit remarkable and highly structured factorizations. Several of these properties of the $j$-function are known to hold in a more general setting, namely for the values of Borcherds products on Shimura varieties attached to orthogonal groups of real signature $(r,2)$ with $r\geq 1$ at special points.
Most notably, special values of Borcherds products on Hilbert modular surfaces at big CM points were studied by Bruinier and Yang in \cite{BY}. 

The theory of \emph{rigid meromorphic cocycles for orthogonal groups} was introduced in \cite{DGL} as a conjectural analogue for the case of groups $G$ of arbitrary real signature $(r,s)$, even in cases where no associated Shimura variety exists. This is a $p$-adic theory, rather than complex meromorphic, in the sense that rigid meromorphic cocycles are elements in  cohomology groups of the form $H^s(\Gamma,\cM_{\mathrm{rq}}^\times),$ where $\Gamma$ is a $p$-arithmetic group and  $\cM_{\mathrm{rq}}^\times$ is the multiplicative group of rigid meromorphic functions on a certain $p$-adic symmetric space $X_p$ attached to $G$. These cocycles can be evaluated at suitable special points on $X_p$, and the resulting $p$-adic values are conjectured to be algebraic, lying in abelian extensions of the reflex fields of the special points.

An important piece of evidence for the validity of the conjectures formulated in \cite{DGL} on the algebraicity of the values of rigid meromorphic cocycles at special points comes from numerical calculations for groups of signatures $(2,1)$ and $(3,1)$. In case of signature $(2,1)$, the orthogonal group is just an inner form of $\SL_2$ and $X_p=\cH_p$ is the $p$-adic upper half plane.
Thus, one recovers the rigid meromorphic cocycles introduced by Darmon--Vonk in \cite{darmon-vonk}, which have been extensively studied in recent articles such as \cite{DV25}, \cite{DPV}, \cite{DV22}, \cite{GD22}, \cite{DV3}, \cite{Geh}, and \cite{gmx}.

The first genuinely new instance of the construction arises in signature $(3,1)$, where $\Gamma$ is a Bianchi congruence subgroup of $\SL_2(\cO_K[1/p])$ with $K$ an imaginary quadratic field, in which $p$ is split, and $X_p=\cH_p\times \cH_p$ is the product of two copies of the $p$-adic upper half plane. In this setting, experimental evidence supporting the algebraicity conjecture was obtained in \cite{DGL} when $K=\Q(i)$ for two types of special points: the so-called \emph{small RM points}, which can be in fact regarded as ``images under the diagonal embedding'' of the special points considered in \cite{darmon-vonk}, and the so-called \emph{small CM points}, for which a few examples were calculated and found to be very close $p$-adically to algebraic numbers over the predicted fields of definition, with norms exhibiting remarkably small and concentrated support.

However, there is a third type of special points at which one can evaluate these Bianchi meromorphic cocycles, called \emph{big special points} in \cite{DGL}. The reflex field  of such points is an almost totally real (ATR) field, and their associated values were not computed numerically in \cite{DGL}. Therefore, the rationality conjecture remained untested for this type of points, which we will refer to as \emph{big ATR points} in this article.

The main goal of this article is to explain how the big ATR points can be computed in the case $K=\Q(i)$, and to present extensive numerical verifications of the algebraicity conjecture formulated in \cite{DGL} for these points. For completeness, we also extend the available numerical evidence in the cases of small RM and small CM points.

A bit more precisely, in \S\ref{sec: field of def} we set the stage by explaining how quadratic number fields give rise to four-dimensional quadratic spaces. Moreover, we determine the fields of definition of the nascent rigid meromorphic cocycles predicted by the conjectures of \cite{DGL}.
In \S\ref{sec: modular symbol} we recall the construction of a family of rigid meromorphic cocycles introduced in \cite[\S5.2]{DGL}, and we extend it in a slight but crucial way that enables evaluation at big ATR points. In \S\ref{sec: evaluation} we recall the construction of small RM points, small CM points, and big ATR points and describe how to perform this evaluation
Finally, in \S\ref{sec: experiments} we present the numerical results and discuss the supporting evidence for the algebraicity conjecture.
Furthermore, we exhibit a divisibility criterion for the primes in the support of these values that resembles the criterion for the special values of Borcherds products on Hilbert modular surfaces proven in \cite{BY}.

\subsubsection*{Acknowledgements} We thank Henri Darmon, Paul Kiefer and Michael Lipnowski for many valuable conversations and insights throughout the development of this project.
Xavier Guitart acknowledges the Spanish State Research Agency (PID2022-137605NB-I00).
Lennart Gehrmann acknowledges support by the Maria Zambrano grant for the attraction of international talent and by Deutsche Forschungsgemeinschaft (DFG, German Research Foundation) via the grant SFB-TRR 358/1 2023 -- 491392403.
Marc Masdeu acknowledges the Spanish State Research Agency (PID2020-116542GB-I00).
Guitart and Masdeu were supported by the Spanish State Research Agency, through the Severo Ochoa and María de Maeztu Program for Centers and Units of Excellence in R\&D (CEX2020-001084-M).

\section{Spinor norms and the field of definition}\label{sec: field of def}
The algebraicity conjecture for special values of rigid meromorphic cocycles of \cite{DGL} involves the compositum of two number fields: one is the field of definition of the point one is evaluating the rigid meromorphic cocycle at and the other is the field of definition of the rigid meromorphic cocycle itself.
The latter is a polyquadratic extension, whose definition involves certain local spinor groups.
In the following we will compute the local spinor groups in question and thus determine the field of definition for a large class of rigid meromorphic cocycles of four-dimensional quadratic spaces.

\subsection{The spinor norm}\label{sec: spin}
Let $F$ be a field of characteristic $\mathrm{char}(F)\neq 2$ and $(V,q)$ a finite-dimensional non-degenerate quadratic space over $F$.
Write $\langle \cdot, \cdot \rangle$ for the associated bilinear form, that is, $\langle v,v \rangle = 2 q(v)$ for all $v\in V$.
For any $a\in V$ with $q(a)\neq 0$ write $\tau_a \in \Ort(V)$ for the reflection 
\[
\tau_a\colon V\longrightarrow V,\ v \longmapsto v - 2 \frac{\langle a,v \rangle}{\langle a,a \rangle} a.
\]
Recall that the \emph{spinor norm} is the unique homomorphism
\[
\sn_F\colon \Ort(V)\longrightarrow F^\times / (F^\times)^2
\]
such that for any $a\in V$ with $q(a)\neq 0$ the reflection $\tau_a$ is mapped to $q(a) \bmod (F^\times)^2$.
Let $W\subseteq V$ be a non-degenerate subspace.
One can extend any $g\in \Ort(W)$ to an element in $\Ort(V)$ by letting it act as the identity on the orthogonal complement of $W$.
Under this embedding, the spinor norm on $\Ort(V)$ restricts to the spinor norm on $\Ort(W)$.
Given a non-zero isotropic vector $w\in V$, that is $q(w)=0$, and an element $a\in V$ with $\langle a,w\rangle=0$, one defines the \emph{Eichler transformation} via
\[
E(w,a)(v)\coloneqq v - \langle v, w\rangle a + \langle v, a\rangle w - q(a)\langle v, w \rangle w \quad \forall v\in V.
\]
Straightforward calculations show that $E(w,a)\in \SO(V)$, $E(w,w)=\id_V$ and that $E(w,a+b)=E(w,a)\circ E(w,b)$ for all $a,b\in V$ orthogonal to $w$.
As $E(w,a)=E(w,a/2)^2$, it follows that
\[
\sn_F(E(w,a))=1.
\]

Suppose that $R$ is a principal ideal domain with field of fractions equal to $F$.
An $R$-lattice $L$ in a quadratic $F$-vector space $(V,q)$ is called \emph{even} if $q(L)\subseteq R$.
It is called \emph{self-dual} if it agrees with its \emph{dual lattice}
\[
L^\sharp\coloneq \{v\in V\,\vert\, \langle v,L\rangle \subseteq R \}.
\]
For an even lattice we consider the subgroups $\Ort(L)\subseteq \Ort(V)$ and $\SO(L)\subseteq \SO(V)$ of elements that respect the lattice $L$.
For an isotropic vector $w\in L$ and $a\in L$ orthogonal to $w$, the Eichler transformation $E(w,a)$ belongs to $\SO(L)$.
The \emph{discriminant module} of an even lattice is the quotient
\[
D_L\coloneq L^\sharp/L,
\]
which is a torsion $R$-module.
The quadratic form $q$ descends to a quadratic map
\[
D_L \longrightarrow F/R,\quad x \longmapsto q(x) \bmod R.
\]
By a slight abuse of notation, we denote the quadratic map and the associated non-degenerate bilinear map by $q$ and $\langle\cdot,\cdot\rangle$, respectively.
The group of $R$-linear transformations of $D_L$ preserving $q$ is denoted $\Ort(D_L).$
The natural action of $\Ort(L)$ on $L^\sharp$ yields a homomorphism
\begin{equation}\label{reduction}
\Ort(L)\longrightarrow \Ort(D_L),
\end{equation}
which is surjective in case $R=\Z_p$ by \cite[Theorem 1.9.5]{Nikulin}.
If $L$ is a self-dual even lattice, one may define the integral spinor norm
\[
\sn_{R}\colon \Ort(L)\longrightarrow R^\times/(R^\times)^2
\]
as in \cite[Appendix C]{Conrad}.
Since the diagram
\[
\begin{tikzcd}
\Ort(L)\arrow[d, phantom, sloped, "\subset"] \arrow[r, "\sn_{R}"]&  R^\times/(R^\times)^2 \arrow[d]\\
\Ort(V)\arrow[r, "\sn_{F}"]& F^\times/(F^\times)^2
\end{tikzcd}
\]
is commutative, it follows that
\begin{equation}\label{selfdual}
\sn_R(\Ort(L))\subseteq R^\times \bmod (F^\times)^2.
\end{equation}

We want to study certain four-dimensional quadratic spaces attached to quadratic \'etale extensions.
To that end, let $K$ be an \'etale $F$-algebra of dimension $2$, i.e., $K$ is either a quadratic field extension or isomorphic to $F \times F$.
Given $\alpha \in K$ write $\alpha'$ for its Galois conjugate and $\No_{K/F}(\alpha)=\alpha \alpha'\in F$ for its norm.
Consider the quadratic space $V'_K$ whose underlying vector space is $K$ with quadratic form given by $\No_{K/F}$.
If $K$ is a field, $V'_K$ is anisotropic.
On the other hand, $V'_K$ is a hyperbolic plane in case $K= F \times F$.
Letting $a\in K^\times$ act on $V'_K$ by multiplying with $a/a'$ defines a homomorphism from $K^\times$ to $\SO(V'_K)$.
In fact, multiplication with $a/a'$ is equal to the composition $\tau_a \circ \tau_1$ (see for example \cite[Lemma 4.3]{DGL}).
One easily checks that this assignment defines an isomorphism
\begin{equation}\label{torusident}
K^\times/F^\times \xlongrightarrow{\sim} \SO(V'_K).
\end{equation}
By definition, the spinor norm of multiplication with $a/a'$ is equal to $\No_{K/F}(a) \bmod (F^\times)^2$.
Moreover, $\tau_1(a)=-a'$ for every $a\in K$ and, in particular, the equalities $\det(\tau_1)=-1$ and $\sn_F(\tau_1)=1$ hold.
In case $K= F\times F$, we will identify $K^\times/F^\times$ with $F^\times$ via the map $(a,b)\mapsto ab^{-1}$.

Consider the quadratic space $V_K\coloneq K\oplus F \oplus F$ with quadratic form $q$ given by $q(\alpha,b,c)= \No_{K/F}(\alpha) - bc$.
In other words, $V_K$ is the orthogonal direct sum of $V'_K$ and a hyperbolic plane.
We identify $V_K$ with a subspace of $M_2(K)$ by sending $(\alpha,b,c)$ to the matrix
\[
[\alpha;b,c]\coloneq \begin{pmatrix} \alpha & -b \\ c & -\alpha' \end{pmatrix}.
\]
Under this identification the quadratic form $q$ is given by the negative of the determinant.
The group $G\coloneq\{g \in \GL_2(K)\ \vert\ \det(g)\in F^\times\}$ acts on $V$ by isometries via
\[
g[\alpha;b,c]= g \cdot [\alpha;b,c] \cdot (g')^{-1}
\]
yielding a surjective homomorphism $G \rightarrow \SO(V_K)$.
The kernel of this homomorphism is given by the subgroup of scalar diagonal matrices with entries in $F^\times$.
Thus, the determinant $\det\colon G \rightarrow F^\times$ descends to a homomorphism
\[
\SO(V_K)\longrightarrow F^\times/(F^\times)^2,
\]
which can be shown to agree with the spinor norm.
In particular, the subgroup of $\SO(V_K)$ of elements with trivial spinor norm is identified with the group $\PSL_2(K)$.

\subsection{The field of definition}
Let $(V,q)$ be a quadratic $\Q$-vector space of dimension $n\geq 3$ and real signature $(r,s)$.
Moreover, for any subgroup $G\subseteq \Ort(V\otimes \bbR)$ we write $G^+\subseteq G$ for the subgroup of elements of trivial real spinor norm.
Fix a prime $p\geq 3$ such that $V\otimes \Q_p$ admits a self-dual $\Z_p$-lattice.
Let $X_p$ be the $p$-adic symmetric space attached to $V$ in \cite[Section 1.3]{DGL} (see also Section \ref{sec: spaces} below).
This rigid analytic space carries a natural action by the group $\SO(V\otimes \Q_p)$.
Moreover, consider the group $\Div_{\mathrm{rq}}^\dagger(X_p)$ of locally finite, rational quadratic divisors on $X_p$ (see \cite[Section 2.3]{DGL}).
Finally, let $\bbA^p_f$ be the restricted product of the non-Archimedean fields $\Q_\ell$, $\ell\neq p$, $\widehat{\Z}^p\subseteq \bbA^p_f$ the maximal compact subring, and consider the space
\[
S^p_f(V)\coloneq \{\Phi\colon V\otimes\bbA^p_f\rightarrow \Z\, \vert\, \Phi\, \mbox{is locally constant with compact support} \}
\]
of integer-valued Schwartz--Bruhat functions on $V\otimes \bbA^p_f$.
Given $\Phi \in S^p_f(V)$ we let $U_\Phi$ be the stabilizer of $\Phi$ in $\SO(V)$ and $\Gamma_\Phi=U_\Phi\cap \SO(V)^+$ the associated $p$-arithmetic subgroup.
Then, Section 2.4 of \cite{DGL} attaches to every positive rational number $d\in\Q_{>0}$ the so-called \emph{Kudla--Millson} divisor
\[
\mathscr{D}_{d,\Phi}\in\mathrm{H}^s(\Gamma_\Phi,\Div_{\mathrm{rq}}^\dagger(X_p)).
\]
The construction of Kudla--Millson divisors in special cases will be recalled in Section \ref{sec: MS} below.
These Kudla--Millson divisors are $p$-adic analogues of Heegner divisors on connected components of orthogonal Shimura varieties.
Write $\bbI$ for the idele group, which is the restricted product over the multiplicative groups $\Q_\ell^\times$ for all primes $\ell$ and $\bbR^\times$.
Taking the product of the local spinor norms defines a homomorphism
\[
\sn_{\bbA^p_f}\colon \SO(V_{\bbA^p_f})\longrightarrow \bbI.
\]
By analogy with fields of definition of connected components of orthogonal Shimura varieties we make the following definition (cf.~\cite[Section 4.3.8]{DGL}):
\begin{df}
The field of definition $E_\Phi$ of $\Phi$ is the polyquadratic extension of $\Q$ whose Galois group is identified with
\[
C(\Phi)\coloneq \Q^\times\backslash\bbI/\bbI^2 \sn_{\bbA^p_f}(U_\Phi)\Q_p^\times.
\]
via the Artin reciprocity map.
\end{df}
\begin{rmk}\label{splitremark}
Note that, by construction, the prime $p$ is split in $E_\Phi$ for every $\Phi\in S^p_f(V)$.
\end{rmk}

Let $K$ be an \'etale $\Q$-algebra of degree $2$ with ring of integers $\cO_K$.
In case $K=\Q\times \Q$ is the split \'etale algebra, $\cO_K$ denotes the order $\Z \times \Z$.
The quadratic space $V_K$ has signature $(3,1)$ if $K$ is imaginary quadratic and signature $(2,2)$ otherwise.
The base change of $V_K$ to $\Q_p$ admits a self-dual lattice if and only if $p$ is not ramified in $K$, which we will assume in the following.

Fix an integer $N\geq 1$ coprime to $p$ and put $\Z_N\coloneq\prod_{\ell\mid N} \Z_\ell$.
Moreover, let $\chi\colon (\Z/N\Z)^\times \to \{\pm 1\}$ be a quadratic Dirichlet character,
which we also view as a character on $\Z_N^\times$ via pullback.
Denote by $K_\chi$ the extension of $\Q$ cut out by $\chi$, which is at most of degree $2$.
Consider the $\Z$-lattice $L_K\coloneq\cO_K\times \Z \times \Z\subseteq V_K$ and its closure $\hat{L}^p_K$ in $V_K\otimes \bbA_f^p$.
The Schwartz--Bruhat function $\Phi_\chi \in S^p_f(V_K)$ is defined by
\[
\Phi_\chi(\alpha,b,c)=\begin{cases}
 \chi(c_N) &\mbox{if}\, (\alpha,b,c) \in \hat{L}^p_K,\, c_N\in  \Z_N^\times,\\
0 &\mbox{else},
\end{cases}
\]
where $c_N$ denotes the projection from $\widehat{\Z}^p$ to $\Z_N$.

\begin{prop}\label{fieldofdefprop}
The field of definition $E_{\Phi_\chi}$ of $\Phi_\chi$ is equal to $\Q$ unless $\chi$ is non-trivial, $K=K_\chi$, and $p$ is split in $K$, in which case $E_{\Phi_\chi}=K_\chi$.
\end{prop}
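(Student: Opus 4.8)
Since the field of definition $E_{\Phi_\chi}$ is the polyquadratic extension whose Galois group is $C(\Phi_\chi)=\Q^\times\backslash\bbI/\bbI^2\Q_p^\times\sn_{\bbA^p_f}(U_{\Phi_\chi})$, the plan is first to pin down the subgroup $\sn_{\bbA^p_f}(U_{\Phi_\chi})\subseteq\bbI$ and then translate the answer into a field by class field theory. As $\Phi_\chi=\bigotimes_{\ell\neq p}\Phi_{\chi,\ell}$ is a pure tensor, $U_{\Phi_\chi}=\prod'_{\ell\neq p}U_{\Phi_\chi,\ell}$ with $U_{\Phi_\chi,\ell}$ the stabilizer of the $\ell$-component $\Phi_{\chi,\ell}$ in $\SO(V_K\otimes\Q_\ell)$, so that $\sn_{\bbA^p_f}(U_{\Phi_\chi})=\prod'_{\ell\neq p}\sn_{\Q_\ell}(U_{\Phi_\chi,\ell})$ and the whole problem becomes the local computation of $\sn_{\Q_\ell}(U_{\Phi_\chi,\ell})\subseteq\Q_\ell^\times/(\Q_\ell^\times)^2$. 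The key structural observation is that $\Phi_{\chi,\ell}$ depends on a vector $v=(\alpha,b,c)$ only through the linear functional $c=-\langle v,w\rangle$ attached to the isotropic vector $w=(0,1,0)\in V_K$ (through the indicator of $L_K\otimes\Z_\ell$, the condition $c\in\Z_\ell^\times$, and, when $\ell\mid N$, the value $\chi_\ell(c)$ of the $\ell$-component $\chi_\ell$ of $\chi$); consequently any isometry of $V_K\otimes\Q_\ell$ fixing $w$ and stabilising $L_K\otimes\Z_\ell$ lies in $U_{\Phi_\chi,\ell}$.

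The computation then splits into three cases (always $\ell\neq p$; write $K_\ell=K\otimes\Q_\ell$). \emph{(i) $\ell$ unramified in $K$:} here $L_K\otimes\Z_\ell$ is self-dual and even and the support of $\Phi_{\chi,\ell}$ spans it, so $U_{\Phi_\chi,\ell}\subseteq\SO(L_K\otimes\Z_\ell)$ and $\sn_{\Q_\ell}(U_{\Phi_\chi,\ell})\subseteq\Z_\ell^\times\bmod(\Q_\ell^\times)^2$ by \eqref{selfdual}; conversely the subgroup of $\SO(V'_K\otimes\Q_\ell)\cong K_\ell^\times/\Q_\ell^\times$ preserving $\cO_K\otimes\Z_\ell$ fixes $w$, hence lies in $U_{\Phi_\chi,\ell}$, and its spinor norm already equals $\Z_\ell^\times\bmod(\Q_\ell^\times)^2$, giving equality. \emph{(ii) $\ell$ ramified in $K$ with $\chi_\ell$ trivial} (in particular whenever $\ell\nmid N$): $U_{\Phi_\chi,\ell}$ now contains the whole of $\SO(V'_K\otimes\Q_\ell)\cong K_\ell^\times/\Q_\ell^\times$ (each $a/a'$ is a unit of $\cO_K\otimes\Z_\ell$, and these elements fix $w$) together with the norm-one torus of the hyperbolic plane summand; the spinor norms $\No_{K_\ell/\Q_\ell}(K_\ell^\times)$ and $\Z_\ell^\times$ of these two subgroups generate $\Q_\ell^\times\bmod(\Q_\ell^\times)^2$, so $\sn_{\Q_\ell}(U_{\Phi_\chi,\ell})=\Q_\ell^\times\bmod(\Q_\ell^\times)^2$. \emph{(iii) $\ell$ ramified in $K$ with $\chi_\ell$ nontrivial:} the inclusion $\SO(V'_K\otimes\Q_\ell)\subseteq U_{\Phi_\chi,\ell}$ gives the lower bound $\No_{K_\ell/\Q_\ell}(K_\ell^\times)\bmod(\Q_\ell^\times)^2$; for the reverse inclusion I would reduce $L_K\otimes\Z_\ell$ modulo the conductor of $\chi_\ell$ and kill the radical, obtaining a non-degenerate quadratic module on which $c$ is still defined, observe that any $g\in U_{\Phi_\chi,\ell}$ preserves the isotropic line spanned by the image of $w$ there, scaling it by some $\lambda\in\ker\chi_\ell$, and then verify — on generators (reflections in $L_K\otimes\Z_\ell$, the hyperbolic torus, Eichler transformations, and $\SO(V'_K\otimes\Q_\ell)$) — that on the parabolic stabiliser of that line the composite of $\sn_{\Q_\ell}$ with the quadratic character of $\Q_\ell^\times$ cutting out $K_\ell$ equals $g\mapsto\chi_\ell(\lambda)$. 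Then $\chi_\ell(\lambda)=1$ forces $\sn_{\Q_\ell}(g)\in\No_{K_\ell/\Q_\ell}(K_\ell^\times)$, so $\sn_{\Q_\ell}(U_{\Phi_\chi,\ell})=\No_{K_\ell/\Q_\ell}(K_\ell^\times)\bmod(\Q_\ell^\times)^2$.

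With these in hand, $\Q(\sqrt d)\subseteq E_{\Phi_\chi}$ if and only if $p$ splits in $\Q(\sqrt d)$ and $(s_\ell,d)_\ell=1$ for all $\ell\neq p$ and all $s_\ell$ in the local spinor image just computed. Case (i) forces $\Q(\sqrt d)$ to be unramified at every prime that is unramified in $K$ other than $p$; case (ii) forces every prime ramified in $K$ at which $\chi$ is trivial to split in $\Q(\sqrt d)$; case (iii) forces each remaining ramified prime $\ell$ of $K$ to have $\Q_\ell(\sqrt d)\in\{\Q_\ell,K_\ell\}$. Together with the splitting of $p$ this means $\Q(\sqrt d)$ is unramified outside the primes ramified in both $K$ and $K_\chi$ and locally equal to $K$ at each of them. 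If $\chi$ is trivial, or if $K\neq K_\chi$, one checks using the mod-$4$/$8$ behaviour at the ramified primes of $K$ (and the fact that a fundamental discriminant is determined by its ramified primes together with their local square classes) that the only such $d$ is a square, whence $E_{\Phi_\chi}=\Q$. If $\chi$ is nontrivial and $K=K_\chi$, writing $K=\Q(\sqrt{d_K})$, the field $\Q(\sqrt{d_K})=K$ satisfies all the conditions precisely when $p$ splits in $K$ and is the only nontrivial candidate, so $E_{\Phi_\chi}=K_\chi$; the "$p$ split" requirement here is the content of Remark \ref{splitremark}.

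The main obstacle is the upper bound in case (iii): showing that imposing $\chi_\ell$-invariance really does cut the spinor-norm image down from all of $\Q_\ell^\times$ to the norm group $\No_{K_\ell/\Q_\ell}(K_\ell^\times)$. This rests on the identity relating the spinor norm of a lattice isometry that preserves an isotropic flag-line to the eigenvalue by which it acts there, and it needs genuine care at the prime $2$, where $\chi_2$ need not be tame and the $2$-adic spinor norm is more subtle; the global bookkeeping in the last paragraph likewise demands attention at $2$.
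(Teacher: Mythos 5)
Your overall route---computing the local spinor--norm images of the stabiliser and then translating into a polyquadratic field by class field theory---is the same as the paper's, and your cases (i) and (ii) agree with Lemmas \ref{spinor1}--\ref{spinor4}. The first genuine problem is the case division in (iii): ``$\ell$ ramified in $K$ and $\chi_\ell$ nontrivial'' is not the right dichotomy. As you yourself use in case (ii), the hyperbolic torus contributes $\ker(\chi_\ell)$ to the spinor--norm image, and $\ker(\chi_\ell)\cdot\No_{K_\ell/\Q_\ell}(K_\ell^\times)$ is all of $\Q_\ell^\times$ modulo squares unless $K_\ell$ coincides with the ramified extension cut out by $\chi_\ell$. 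So your claimed equality $\sn_{\Q_\ell}(U_{\Phi_\chi,\ell})=\No_{K_\ell/\Q_\ell}(K_\ell^\times)$ is false whenever these two ramified extensions differ, and your own test identity ``$\eta\circ\sn_{\Q_\ell}=\chi_\ell(\lambda)$'' already fails on the hyperbolic torus in that situation. This is not harmless for the endgame: with the resulting (too weak) local constraint, take for instance $K=\Q(\sqrt{2})$ and $\chi$ the character modulo $8$ with $K_\chi=\Q(\sqrt{-2})$; then $d=2$ satisfies every condition you impose away from $p$, so your concluding assertion that ``the only such $d$ is a square'' when $K\neq K_\chi$ does not follow from what you have established. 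The paper's Lemma \ref{spinor4} makes exactly the missing distinction: ramified with $K\ncong K_\chi$ gives \emph{all} of $F^\times$, which is what forces $E_{\Phi_\chi}=\Q$ in that case (together with the a priori inclusion $E_{\Phi_\chi}\subseteq K_\chi$, which you never invoke in your final paragraph).

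The second and more serious gap is the upper bound in the one case where it is genuinely needed, namely $\ell$ ramified with $K$ locally equal to the field cut out by $\chi_\ell$ (globally: $K=K_\chi$ and $p$ split). You propose to verify a homomorphism identity ``on generators'' (reflections, the torus, Eichler transformations, $\SO(V'_K\otimes\Q_\ell)$), but you never show that these elements generate $U_{\Phi_\chi,\ell}$, or even the stabiliser of the relevant isotropic line, so checking the identity on them proves nothing about a general $g$. This is precisely where the paper has to work: it passes to the discriminant module of the auxiliary lattice $L_K(N)$, uses part \eqref{spinor1b} of Lemma \ref{spinor1} together with torus elements and Eichler transformations to modify an arbitrary $g\in U_{\Phi_\chi}$ into one acting trivially on the discriminant group, and then invokes the Miranda--Morrison theorems (\cite[Theorem 9.3]{MM} for odd residue characteristic, Theorems 10.7 and 10.8 of \cite{MM} at $2$) to conclude that such a $g$ has spinor norm in $\ker(\chi)$. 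Nothing in your sketch substitutes for that last input; you correctly flag it as ``the main obstacle,'' but the proposal does not overcome it.
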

We will prove Proposition \ref{fieldofdefprop} in Section \ref{sec:proof of prop}.
\subsection{Local spinor norms}
From now on $F$ denotes a non-Archimedean local field with $\mathrm{char}(F)\neq 2$ and $K$ is an \'etale $F$-algebra of degree $2$.
Let $L'_K\subseteq V'_K$ be the even lattice given by the ring of integers $\cO_K$ of $K$.
Analogously to the case of the rational numbers, $\cO_K$ denotes the order $\cO_F \times \cO_F$ in case $K=F\times F$ is split.
The lattice $L'_K$ is self-dual if and only if $K/F$ is unramified.
\begin{lemma}\label{spinor1}
\begin{enumerate}[(a)]
\item\label{spinor1a} We have:
\[
\sn_{F}(\Ort(L'_K))=\sn_F(\SO(L'_K))=\begin{cases}
\No_{K/F}(K^\times)\bmod (F^\times)^2 &\text{if $K$ is a field},\\
\cO_F^\times \bmod (F^\times)^2&\text{if $K = K \times K$.}
\end{cases}
\]
\item\label{spinor1b}
Assume that $F=\Q_p$ for some prime $p$.
Then the homomorphism
\[
\SO(L'_K)\longrightarrow D_{L'_K}
\] 
is surjective.
\end{enumerate}
\end{lemma}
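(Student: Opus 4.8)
The plan is to treat the two parts separately, in both cases leaning on the explicit description of $\SO(V'_K)$ furnished by \eqref{torusident}. \emph{Part (a).} First I would reduce the statement about $\Ort$ to the one about $\SO$: since $V'_K$ is non-degenerate, $\Ort(V'_K)=\SO(V'_K)\sqcup\tau_1\SO(V'_K)$, and $\tau_1$ preserves $L'_K=\cO_K$ (because $\tau_1(\alpha)=-\alpha'$ and Galois conjugation fixes $\cO_K$) with $\sn_F(\tau_1)=q(1)=1$; hence $\Ort(L'_K)=\SO(L'_K)\sqcup\tau_1\SO(L'_K)$ and $\sn_F(\Ort(L'_K))=\sn_F(\SO(L'_K))$, so it suffices to compute the latter. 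By \eqref{torusident} and the fact recalled immediately above it, $\SO(L'_K)$ consists of the maps ``multiplication by $a/a'$'' ($a\in K^\times$) that preserve $\cO_K$, and the spinor norm of such a map is $\No_{K/F}(a)\bmod (F^\times)^2$. The one real point is the dichotomy: if $K$ is a field then $v_K(a')=v_K(a)$ for every $a\in K^\times$ (conjugation is a valuation-preserving automorphism of $K$), so $a/a'\in\cO_K^\times$ automatically and $\SO(L'_K)=\SO(V'_K)$, giving $\sn_F(\SO(L'_K))=\No_{K/F}(K^\times)\bmod (F^\times)^2$. If instead $K=F\times F$, then for $a=(a_1,a_2)$ one has $a/a'=(a_1/a_2,\,a_2/a_1)$, which preserves $\cO_F\times\cO_F$ exactly when $v(a_1)=v(a_2)$; under the identification $K^\times/F^\times\cong F^\times$, $(a_1,a_2)\mapsto a_1a_2^{-1}$, this identifies $\SO(L'_K)$ with $\cO_F^\times$, and since $\No_{K/F}(a)=a_1a_2\equiv a_1a_2^{-1}\bmod (F^\times)^2$ the spinor norm becomes reduction modulo $(F^\times)^2$, so $\sn_F(\SO(L'_K))=\cO_F^\times\bmod (F^\times)^2$.

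\emph{Part (b).} I would split according to ramification. If $K/\Q_p$ is unramified, or $K=\Q_p\times\Q_p$, then $L'_K$ is self-dual, so $D_{L'_K}=0$ and there is nothing to prove. If $K/\Q_p$ is ramified then, since $p\geq 3$, the extension is tamely ramified; hence the different of $K/\Q_p$ equals $\fp_K$, so $(L'_K)^\sharp=\fp_K^{-1}$ and $D_{L'_K}=\fp_K^{-1}/\cO_K\cong\cO_K/\fp_K$ is cyclic of order $p$ (the residue field being $\FF_p$). Any $\cO_K$-linear automorphism of a cyclic group of order $p$ is multiplication by some $t\in(\Z/p)^\times$, and because the quadratic form on $D_{L'_K}$ is non-degenerate (so $q$ of a generator has exact order $p$) such a map preserves $q$ if and only if $t^2\equiv1\bmod p$, i.e.\ $t\equiv\pm1$; thus $\Ort(D_{L'_K})=\{\pm\id\}$. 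Finally $-\id_{L'_K}$ belongs to $\SO(L'_K)$ (as $V'_K$ is $2$-dimensional) and its image under the reduction map $\SO(L'_K)\to\Ort(D_{L'_K})$ is $-\id_{D_{L'_K}}$, which generates $\Ort(D_{L'_K})$; surjectivity follows.

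The genuinely substantive step is the valuation bookkeeping in part (a) that pins down $\SO(L'_K)$ inside $\SO(V'_K)$; once this is done, both claims in (a) drop out of \eqref{torusident}. Part (b) is then essentially formal: the only non-trivial case has $D_{L'_K}$ of prime order, and there $-\id$ already realizes the unique non-trivial isometry (one could alternatively invoke the surjectivity of \eqref{reduction} from \cite[Theorem 1.9.5]{Nikulin} and merely observe that the generator $-\id_{D_{L'_K}}$ lifts to $-\id_{L'_K}\in\SO(L'_K)$). I do not anticipate any real obstacle.
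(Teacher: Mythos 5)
Part (a) of your proposal is correct and is essentially the paper's own argument: reduce $\Ort$ to $\SO$ via $\tau_1$, invoke the identification \eqref{torusident}, and determine which multiplications by $a/a'$ preserve the lattice (all of them when $K$ is a field, those with $v(a_1)=v(a_2)$ in the split case). Your valuation bookkeeping matches what the paper does, just spelled out in slightly more detail.

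Part (b) has a genuine gap: you assume $p\geq 3$, but the lemma is stated for $F=\Q_p$ with $p$ an arbitrary prime, and $p=2$ is not a discardable corner case — Lemma \ref{spinor1}\eqref{spinor1b} is applied in the proof of Proposition \ref{fieldofdefprop} at every finite place ramified in $K$, and for the paper's main example $K=\Q(i)$ that place is $\ell=2$. Your argument breaks structurally there: for a ramified quadratic extension of $\Q_2$ the different is $\fp_K^2$ or larger, so $D_{L'_K}$ is a $2$-torsion module of order at least $4$ (e.g.\ $(\Z/2)^2$ for $\Q_2(i)$), not cyclic of prime order; $\Ort(D_{L'_K})$ need not reduce to $\{\pm\id\}$; and $-\id$ acts trivially on any $2$-torsion module, so it cannot supply the required lifts. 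The paper's proof covers all $p$ by a different mechanism: write $\cO_K=\Z_p+\Z_p\sqrt d$, split off $\Z_p\cdot 1$ with the form $x\mapsto x^2$ as an orthogonal summand, observe that $\tau_1$ acts trivially on $D_{L'_K}$ (that summand's discriminant module is trivial for odd $p$ and one-dimensional over $\Z/2\Z$ for $p=2$, where multiplication by $-1$ is the identity), and then invoke the surjectivity of $\Ort(L'_K)\to\Ort(D_{L'_K})$ from \cite[Theorem 1.9.5]{Nikulin}: a lift with determinant $-1$ is corrected by composing with $\tau_1$ without changing its image in $\Ort(D_{L'_K})$. Your direct computation is a valid shortcut for odd $p$, but you need the $\tau_1$/Nikulin device (or some substitute) to handle $p=2$.
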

\begin{proof}
The reflection $\tau_1\in \Ort(L'_K)$ has trivial spinor norm but non-trivial determinant and, hence, $\sn_{F}(\Ort(L'_K))=\sn_F(\SO(L'_K))$.
In case $K$ is a field, $a/a'$ belongs to $\cO_K^\times$ and, thus, defines an automorphism of the lattice $L'_K$.
If $K=F\times F$, we may assume that $a$ is of the form $(b,1)$, $b\in F^\times$.
Since $a/a'=(b,b^{-1})$, this defines an automorphism of the lattice $L'_{F\times F}$ if and only if $b\in \cO_F^\times$.

Part \eqref{spinor1b} is trivial in case $K/\Q_p$ is unramified.
So assume that $K/\Q_p$ is a ramified quadratic extension.
In that case $\cO_K=\Z_p+ \Z_p\sqrt{d}$ for some $d\in \Z_p$.
In particular, $\Z_p$ with the quadratic form $x \mapsto x^2$ splits off as an orthogonal direct summand of $\cO_K$.
This lattice is self-dual for $p\neq 2$, while for $p=2$ its discriminant module is one-dimensional over $\Z/2\Z$.
In any case, the reflection $\tau_1$ or in other words, multiplication by $-1$ on $\Z_p$, acts trivially on $D_{L'_K}$.
Thus, the claim follows from the surjectivity of \eqref{reduction}.
\end{proof}

Next we compute the group of spinor norms for the lattice $L_K\coloneq \cO_K \times \cO_F \times \cO_F \subseteq V_K$.
\begin{lemma}\label{spinor2}
We have:
\[
\sn_{F}(\Ort(L_K))=\sn_F(\SO(L_K))=\begin{cases}
F^\times \bmod (F^\times)^2 &\mbox{if}\,K\,\mbox{is ramified},\\
\cO_F^\times \bmod (F^\times)^2&\mbox{else}.
\end{cases}
\]
\end{lemma}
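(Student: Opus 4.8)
The plan is to reduce to computing $\sn_F(\Ort(L_K))$ and then to match an upper and a lower bound in each of the two cases. First, exactly as in the proof of Lemma~\ref{spinor1}, the reflection $\tau_1\in\Ort(L'_K)$ preserves $L_K$, has determinant $-1$ and trivial spinor norm; since every orientation-reversing $g\in\Ort(L_K)$ then satisfies $g\tau_1\in\SO(L_K)$ with $\sn_F(g\tau_1)=\sn_F(g)$, one gets $\sn_F(\Ort(L_K))=\sn_F(\SO(L_K))$, so it is enough to compute the former. I will also repeatedly use the spinor-norm-preserving inclusion $\Ort(L'_K)\hookrightarrow\Ort(L_K)$ given by extending automorphisms by the identity on the hyperbolic summand $\cO_F\oplus\cO_F$, which via Lemma~\ref{spinor1}\eqref{spinor1a} already furnishes a subgroup of $\sn_F(\Ort(L_K))$.

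Suppose first that $K$ is not ramified over $F$, that is, $K/F$ is unramified or $K=F\times F$. Then $L_K=L'_K\perp(\cO_F\oplus\cO_F)$ is a self-dual even lattice, since $L'_K$ is self-dual (the different of $K/F$ is trivial, as recalled before Lemma~\ref{spinor1}) and the standard hyperbolic plane lattice is unimodular; hence \eqref{selfdual} gives $\sn_F(\Ort(L_K))\subseteq\cO_F^\times\bmod(F^\times)^2$. For the opposite inclusion I restrict along $\Ort(L'_K)\hookrightarrow\Ort(L_K)$ and apply Lemma~\ref{spinor1}\eqref{spinor1a}: when $K=F\times F$ this immediately gives $\cO_F^\times\bmod(F^\times)^2$, and when $K$ is an unramified field it gives $\No_{K/F}(K^\times)\bmod(F^\times)^2$, which equals $\cO_F^\times\bmod(F^\times)^2$ because $\No_{K/F}$ is surjective onto $\cO_F^\times$ and $\No_{K/F}(\pi_F)=\pi_F^2$.

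Now suppose $K/F$ is a ramified quadratic field extension, so that the upper bound $F^\times\bmod(F^\times)^2$ is automatic. Restricting along $\Ort(L'_K)\hookrightarrow\Ort(L_K)$ and applying Lemma~\ref{spinor1}\eqref{spinor1a} gives the subgroup $\No_{K/F}(K^\times)\bmod(F^\times)^2$, which has index exactly two in $F^\times/(F^\times)^2$, since $(F^\times)^2\subseteq\No_{K/F}(K^\times)$ and $[F^\times:\No_{K/F}(K^\times)]=2$ by local class field theory. It therefore suffices to produce one automorphism of $L_K$ whose spinor norm lies outside this subgroup. As $K/F$ is ramified, $\No_{K/F}(\pi_K)$ has odd $F$-valuation, so the non-trivial coset of $\No_{K/F}(K^\times)$ in $F^\times$ is represented by a unit $u_0\in\cO_F^\times$. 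Then $a=[0;1,-u_0]\in L_K$ has $q(a)=u_0\in\cO_F^\times$, so the reflection $\tau_a(v)=v-\frac{\langle a,v\rangle}{q(a)}a$ preserves the even lattice $L_K$ (the coefficient $\langle a,v\rangle/q(a)$ is $\cO_F$-integral for $v\in L_K$), while $\sn_F(\tau_a)=u_0\bmod(F^\times)^2$ lies outside $\No_{K/F}(K^\times)\bmod(F^\times)^2$. Hence $\sn_F(\Ort(L_K))$ strictly contains an index-two subgroup of $F^\times/(F^\times)^2$, so it equals all of it. (Equivalently, in the $\GL_2$-model one can use the automorphisms of $L_K$ induced by $\mathrm{diag}(u_0,1)$ and $\mathrm{diag}(\pi_K,\pi_K')$, whose determinants already generate $F^\times/(F^\times)^2$.)

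The genuinely delicate step --- and essentially the only place where ramification is used --- is the last input, that the non-trivial norm coset in $F^\times$ contains a unit, equivalently $\cO_F^\times\not\subseteq\No_{K/F}(K^\times)$. I expect the main care needed is to argue this uniformly in the residue characteristic of $F$, so that it also covers $p=2$ --- where the structure of $\cO_K$ and of the different of $K/F$ is subtler than in the tame case --- and not just odd $p$.
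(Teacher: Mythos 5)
Your proof is correct and follows essentially the same route as the paper: both reduce $\Ort(L_K)$ to $\SO(L_K)$ via $\tau_1$, extract lower bounds from the orthogonal summands $L'_K$ and the hyperbolic plane using Lemma \ref{spinor1}, and obtain the upper bound in the non-ramified case from self-duality and \eqref{selfdual}; in the ramified case the paper simply combines $\cO_F^\times$ (from the hyperbolic summand) with the odd-valuation element $\No_{K/F}(\pi_K)$ to generate $F^\times \bmod (F^\times)^2$, while you repackage the same ingredients via the index-two norm subgroup and an explicit unit reflection. Your closing worry about residue characteristic $2$ is already resolved by your own argument: $[F^\times:\No_{K/F}(K^\times)]=2$ together with the fact that $\No_{K/F}(\pi_K)$ has valuation $1$ forces $\cO_F^\times\not\subseteq\No_{K/F}(K^\times)$ in any residue characteristic.
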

\begin{proof}
The first equality follows by considering the reflection attached to $1\in K$ as in the proof of Lemma \ref{spinor1}.
As the lattice $L_K$ splits of a hyperbolic plane, we have $\cO_F^\times \bmod (F^\times)^2 \subseteq \sn_F(\Ort(L_K))$ in any case by Lemma \ref{spinor1}.
If $K$ is ramified, Lemma \ref{spinor1} also implies that a uniformizer of $F$ lies in $\sn_F(\Ort(L_K))$.
So assume that $K$ is not ramified.
Then the lattice $L_K$ is self-dual and we may conclude by invoking \eqref{selfdual}.
\end{proof}

Let $\chi\colon \cO_F^\times\to \{\pm 1\}$ be a continuous character.
If $\chi$ is trivial, we put $K_\chi=F$.
Otherwise, $K_\chi$ denotes the unique quadratic extension of $F$ with $\No_{K_\chi/F}(K_\chi^\times)\cap \cO_F^\times =\ker(\chi)$, which is necessarily ramified. 
Similarly as above, consider the Schwartz--Bruhat function $\varphi_\chi\colon V_K\rightarrow \Z$ given by
\[
\varphi_\chi(\alpha,b,c)=\begin{cases}
 \chi(c) &\mbox{if}\, (\alpha,b,c) \in L_K,\, c\in \cO_F^\times,\\
0 &\mbox{else},
\end{cases}
\]
and its stabilizer $U_\chi\subseteq \SO(V_K)$.
Note that $U_\chi$ sends the set $\{v\in V_K\,\vert\, \varphi_{\chi}(v)=1\}$ to itself.
As this set generates $L_K$, we see that $U_\chi$ is a subgroup of $\SO(L_K)$.
\begin{lemma}\label{spinor4}
We have:
\[\sn_{\Q_p}(U_\chi)=
\begin{cases}
\cO_F^\times \bmod (F^\times)^2&\mbox{if}\, K\, \mbox{is unramified},\\
F^\times \bmod (F^\times)^2&\mbox{if}\, K\, \mbox{is ramified and}\, K\ncong K_\chi.\\
\end{cases}
\]
\end{lemma}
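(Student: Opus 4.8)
Throughout write $F=\Q_p$. The plan is to bound $\sn_F(U_\chi)$ from above by Lemma \ref{spinor2} and from below by exhibiting two explicit subtori inside $U_\chi$. For the upper bound, the text already shows $U_\chi\subseteq\SO(L_K)$, and by Lemma \ref{spinor2} one has $\sn_F(\SO(L_K))=\cO_F^\times\bmod(F^\times)^2$ when $K$ is unramified and $\sn_F(\SO(L_K))=F^\times\bmod(F^\times)^2$ when $K$ is ramified; so in both cases $\sn_F(U_\chi)$ is contained in the asserted group, and it remains to prove the reverse inclusions.

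\emph{First subtorus.} For $a\in K^\times$ let $\rho_a\in\SO(V'_K)\subseteq\SO(V_K)$ act as multiplication by $a/a'$ on the summand $V'_K$ and trivially on the complementary hyperbolic plane; by \eqref{torusident} these are all of $\SO(V'_K)$, and by the discussion in \S\ref{sec: spin} one has $\sn_F(\rho_a)=\No_{K/F}(a)\bmod(F^\times)^2$ (the spinor norm being unchanged under $\SO(V'_K)\subseteq\SO(V_K)$). Since $\rho_a$ fixes the $b$- and $c$-coordinates it stabilises $\varphi_\chi$ as soon as it stabilises $L_K$, i.e. as soon as $a/a'\in\cO_K^\times$, which holds for every $a\in\cO_K^\times$ (and, when $K$ is a field, for every $a\in K^\times$, by Galois-invariance of the valuation). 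Hence $\sn_F(U_\chi)\supseteq\No_{K/F}(\cO_K^\times)\bmod(F^\times)^2$, which equals $\cO_F^\times\bmod(F^\times)^2$ when $K$ is unramified because the norm is then surjective on units; together with the upper bound this settles the unramified case, and when $K$ is a ramified field we in fact obtain the index-$2$ subgroup $\No_{K/F}(K^\times)\bmod(F^\times)^2$.

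\emph{Second subtorus.} For $t\in\cO_F^\times$ let $\eta_t\in\SO(V_K)$ act trivially on $V'_K$ and by $(b,c)\mapsto(tb,t^{-1}c)$ on the hyperbolic plane. Lifting $\eta_t$ to $\mathrm{diag}(t,1)\in G$ under $G\to\SO(V_K)$ gives $\sn_F(\eta_t)=t\bmod(F^\times)^2$, while substituting into the definition gives $\varphi_\chi(\eta_t v)=\chi(t)\,\varphi_\chi(v)$, so $\eta_t\in U_\chi$ exactly when $t\in\ker\chi$. Thus $\sn_F(U_\chi)\supseteq\ker\chi\bmod(F^\times)^2$, and multiplying by the $\rho_a$'s,
\[
\sn_F(U_\chi)\ \supseteq\ \bigl(\No_{K/F}(K^\times)\cdot\ker\chi\bigr)\bmod(F^\times)^2 .
\]

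It remains to check that in the ramified case with $K\ncong K_\chi$ this last group is everything, i.e. that $\No_{K/F}(K^\times)\cdot\ker\chi=F^\times$; this is the only step that uses the hypothesis $K\ncong K_\chi$ and I expect it to be the crux. By construction $\ker\chi=\No_{K_\chi/F}(K_\chi^\times)\cap\cO_F^\times$, whereas $\No_{K/F}(K^\times)\cap\cO_F^\times=\No_{K/F}(\cO_K^\times)$ has index $2$ in $\cO_F^\times$; since $K\ncong K_\chi$, the uniqueness built into the definition of $K_\chi$ forces $\No_{K/F}(\cO_K^\times)\neq\ker\chi$, hence $\ker\chi\not\subseteq\No_{K/F}(K^\times)$, and as $[F^\times:\No_{K/F}(K^\times)]=2$ this yields $\No_{K/F}(K^\times)\cdot\ker\chi=F^\times$. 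Combined with the upper bound this gives $\sn_F(U_\chi)=F^\times\bmod(F^\times)^2$, completing the proof. Aside from this class-field-theory input, the only non-formal points are the elementary verifications that $\sn_F(\eta_t)=t\bmod(F^\times)^2$ and that $\eta_t$ stabilises $\varphi_\chi$ precisely on $\ker\chi$.
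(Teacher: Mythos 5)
Your proof follows the paper's argument exactly: the upper bound comes from Lemma \ref{spinor2} via the inclusion $U_\chi\subseteq\SO(L_K)$, and the lower bound from the torus $\SO(L'_K)$ of Lemma \ref{spinor1} together with the action of $\ker(\chi)$ on the hyperbolic plane spanned by the coordinates $b$ and $c$. The only difference is that you spell out the spinor-norm computation for $\eta_t$ and the final group-theoretic step that $\No_{K/F}(K^\times)\cdot\ker(\chi)=F^\times$ when $K$ is ramified and $K\ncong K_\chi$, which the paper leaves as ``one easily sees''.
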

\begin{proof}
Assume first that $K$ is unramified.
Then by Lemma \ref{spinor1}, $\sn_{F}(U_\chi)$ contains $\cO_F^\times \bmod (F^\times)^2$, while it is contained in the latter by Lemma \ref{spinor2}.
In the ramified case, Lemma \ref{spinor1} again implies that $\sn_{F}(U_\chi)$ contains $\No_{K/F}(K^\times)\bmod (F^\times)^2$.
The action of $\ker(\chi)\subseteq \cO_F^\times$ on the hyperbolic plane spanned by the coordinates $b$ and $c$ clearly stabilizes $\Phi_{\chi}$.
In case $K\neq K_\chi$ one easily sees that $\ker(\chi)$ and $\No_{K/F}(K^\times)$ together generate $F^\times \bmod (F^\times)^2$.
\end{proof}

\subsection{Proof of Proposition \ref{fieldofdefprop}}\label{sec:proof of prop}
By \eqref{torusident}, the orthogonal group of the hyperbolic plane spanned by the coordinates $b$ and $c$ can be identified with the multiplicative group.
Under this identification $\ker(\chi)\subseteq (\widehat{\Z}^p)^\times$ clearly stabilizes $\Phi_\chi$.
In particular, $\ker(\chi)$ is contained in $\sn_{\bbA^p_f}(U_{\Phi_\chi})$, which implies that $E_{\Phi_\chi}\subseteq K_\chi$.
From Remark \ref{splitremark} and Lemma \ref{spinor4} it follows that $E_{\Phi_\chi}=\Q$ unless $p$ is split in $K_\chi$ and $K=K_\chi$.
So let us assume that this is the case.
Consider the even lattice $L_K(N)\coloneq \{(\alpha,b,c) \in L_K\, \vert\, N \mid C\}$.
Its discriminant module splits into the orthogonal direct sum $D_{L_K(N)}=D_{L'_K}\oplus H$, where $H$ is the free $\Z/N\Z$-module of rank $2$ with generators $e_1$ and $e_2$ having representatives $b/N$ and $c$,respectively, and therefore satisfying
\[
q(e_1)=q(e_2)=0\bmod \Z\quad \mbox{and}\quad \langle e_1, e_2\rangle =-1/N \bmod\Z.
\]
One easily checks that the group $U_{\Phi_\chi}$ stabilizes the completion of the lattice $L_K(N)$ in $V_K\otimes \bbA^p_f$ and, hence, it acts on $D_{L_K(N)}$.
Since it also stabilizes the completion of the lattice $L_K$, it stabilizes the $\Z/N\Z$-line generated by $e_2$.
By multiplying with an element of $\ker(\chi)$ (viewed as a subgroup of the automorphisms of the hyperbolic plane spanned by $b$ and $c$), it is enough to consider those $g\in U_{\Phi_\chi}$ that stabilize $e_2$.
For such $g$ we compute
\[
\langle g\alpha, e_2\rangle=\langle g\alpha, ge_2\rangle=\langle \alpha, e_2 \rangle=0\quad\forall \alpha \in D_{L'_K}.
\]
In particular, $g\alpha$ is of the form $\alpha_g+g_\alpha e_2$ for some $\alpha_g\in D_{L'_K}$ and $g_\alpha \in \Z/N\Z$.
As $q(\alpha)=q(g\alpha)=q(\alpha_g+g_\alpha e_2)=q(\alpha_g)$, the assignment $\alpha \mapsto \alpha_g$ defines an element of $\Ort(D_{L'_K})$.
Using Lemma \ref{spinor1}, we may reduce to the case that $\alpha_g=\alpha$ for all $\alpha \in D_{L'_K}$.
By a similar computation as above, we see that $ge_1=e_1+g_{e_1}e_2+\beta$ for some $g_{e_1}\in \Z/n\Z$ and $\beta\in D_{L'_K}$.
As $ge_1$ is isotropic and orthogonal to $g\alpha$, one deduces that
\[
q(\beta)=- g_\beta/N\quad\mbox{and}\quad\langle \alpha, \beta \rangle= -g_\alpha/N.
\]
Let $\widetilde{\beta} \in L_K(N)^\sharp$ be a lift of $\beta$.
Since $N\widetilde{\beta}\in L_K(N)$, the Eichler transformation $E(c,N\widetilde{\beta})$ belongs to $\SO(L_K(N))$.
One readily checks that $E(c,N\widetilde{\beta})$ and $g$ induce the same action on $D$.
Since Eichler transformations have trivial spinor norm, we may assume that $g$ acts trivially on $D$.
But every $g$ that acts trivially on $D$ satisfies $\sn_{\bbA_f^p}(g)\in \ker(\chi)$:
in case $p\neq 2$, this follows from \cite[Theorem 9.3]{MM}, while in case $p=2$, it follows from Theorem 10.7 and Theorem 10.8 of \cite{MM}.
Thus, the assertion follows.

\section{Rigid meromorphic cocycles for the Gaussian rationals}\label{sec: modular symbol}

Let $p\equiv 1\pmod 4$ be a prime, and let $K=\Q(i)$ be the field of Gaussian rationals. In this section we recall the construction and algorithmic computation of a family of rigid meromorphic cocycles  or, more precisely, rigid meromorphic modular symbols, for a congruence subgroup $\Gamma \subseteq \SL_2(\cO_K[1/p])$ as described in \cite[\S5.2]{DGL}, and we complement it in one essential aspect. In \textit{loc.cit.~}only the evaluation of these modular symbols at the pair of cusps $(0,\infty)$ was provided. While this suffices for evaluating at small CM and small RM points, evaluation at big ATR points requires the values of the modular symbol at other pairs of cusps. This is precisely the extension that we develop here.

Let $V=V_K$ be the four dimensional $\Q$-vector space
\begin{align*}
  V = \left\{[\alpha,b,c]=\smtx{\alpha}{-b}{c}{-\alpha'}\in \mathrm{M}_2(K)\ \middle\vert\ \alpha\in K, b,c\in\Q\right\},
\end{align*}
where $\alpha'$ denotes the $\Gal(K/\Q)$-conjugate of $\alpha$.
Regard $V$ as a quadratic space over $\Q$ with quadratic form $q$ given by $q([\alpha,b,c])=-\det([\alpha,b,c])=\No_{K/\Q}(\alpha)-bc$.
Recall from Section \ref{sec: spin} that the group $\SL_2(K)$ acts on $V$ by means of
\begin{align*}
  \gamma * M = \gamma M(\gamma')^{-1}
\end{align*}
and this action identifies $\PSL_2(K)$ with the subgroup of elements in $\SO(V)$ with trivial spinor norm.

\subsection{Symmetric spaces}\label{sec: spaces}
The symmetric space $X_\infty$ of the real Lie group $\SO(V\otimes \bbR)$ is given by the space of negative definite lines in $V\otimes \bbR$.
It carries a natural action of $\SO(V\otimes\bbR)$ and, therefore, it also carries an action of $\SL_2(\bbC)$.
For an element $M\in V$ with $q(M)>0$ we write $\Delta_{M,\infty}$ for the subspace of $X_\infty$ of all lines orthogonal to $M$.
We identify $X_\infty$ with hyperbolic $3$-space via the map
\[
X_\infty \xlongrightarrow{\sim} \bbC \times \bbR_{>0},\quad [\alpha,b,c] \longmapsto (\alpha/c, -q([\alpha,b,c])/c^2).
\]
Under this identification $\SL_2(\bbC)$ acts on $(z,t)\in \bbC \times \bbR_{>0}$ via
\[
\begin{pmatrix} a & b \\ c & d \end{pmatrix}.(z,t)=\left(\frac{(az+b)\overline{(cz+d)}+act^2)}{|cz+d|^2+|c|^2t^2},\frac{t}{|cz+d|^2+|c|^2t^2}\right).
\]
Let $\overline{X}_\infty$ be the space obtained by attaching $\mathbb{P}^1(K)$ to $X_\infty$ by mapping $\alpha\in K$ to $(\alpha,0)$ and $\infty$ to $(0,\infty)$.
The action of $\SL_2(K)$ on $X_\infty$ extends continuously to an action on $\overline{X}_\infty$.
On the boundary the action is given by fractional linear transformations.
Given $r,s\in \mathbb{P}^1(K)$, we denote by $(r,s)$ the open hyperbolic geodesic path from $r$ to $s$ in $X_\infty$.
Let $\Z[\mathbb{P}^1(K)]_0\subseteq \Z[\mathbb{P}^1(K)]$ the subgroup of divisors of degree $0$.
Given an abelian group $M$, an \emph{$M$-valued modular symbol} is a $\Z$-linear homomorphism from $\Z[\mathbb{P}^1(K)]_0$ to $M$.
In case $M$ carries a $\Gamma$-action for some subgroup $\Gamma\subseteq \GL_2(K)$, we endow the space $\mathcal{MS}(M)$ of $M$-valued modular symbols with the $\Gamma$-action given by $(\gamma F)(D)=\gamma(F(\gamma^{-1}D))$.
Write
\begin{equation}\label{delta}
\delta\colon \mathcal{MS}^{\Gamma}(M)\longrightarrow \mathrm{H}^1(\Gamma,M)
\end{equation}
for the first boundary map in the long exact sequence in group cohomology induced by the short exact sequence
\[
0 \longrightarrow \Z[\mathbb{P}^1(K)]_0 \longrightarrow \Z[\mathbb{P}^1(K)] \xlongrightarrow{\deg} \Z \longrightarrow 0.
\]
A modular symbol is uniquely characterized by its values on elements of the form $[s]-[r]$, which we identify with the geodesic $(r,s)$.
A function $F$ on these open hyperbolic geodesics comes from a modular symbol if and only if
\[
F(r,s)=-F(s,r)\quad\mbox\quad F(r,s)+F(s,t)+F(t,r)=0\ \forall r,s,t\in\mathbb{P}^1(K).
\]

Let $Q$ be the zero locus of $q$ inside the projective space of $V$.
In other words, $Q$ parametrizes isotropic lines in $V$.
The $p$-adic symmetric space $X_p$ attached to $V$ is the connected rigid analytic subspace of the base change of $Q$ to $\Q_p$, whose $\bbC_p$-valued points are given by
\[
X_p(\bbC_p)=\{\xi\in X_p\,\vert\, \langle\xi, w \rangle= 0\ \forall w\in Q(\Q_p) \}.
\]
See \cite[Section 1.3]{DGL} for more details on the construction.
As in the Archimedean case, for $M\in V$ with $q(M)>0$ we consider the divisor $\Delta_{M,p}\subseteq X_p$ given by all isotropic lines orthogonal to $M$. 
Now fix once and for all an embedding $\overline{\Q}\hookrightarrow \bbC_p$.
As the prime $p$ is split in $K$, this embedding induces the isomorphism
\[
K\otimes \Q_p \xlongrightarrow{\sim} \Q_p \times \Q_p,\quad \alpha\otimes x \longmapsto (\alpha x,\alpha'x).
\]
Utilizing this isomorphism we identify $X_p$ with the square of the $p$-adic upper half space $\cH_p$ via the assignment
\[
\cH_p\times\cH_p\xlongrightarrow{\sim} X_p,\quad (\tau_1,\tau_2)\longmapsto [(\tau_1,\tau_2),\tau_1\tau_2,1].
\]
Under this identification the divisor $\Delta_{M,p}$ of a matrix $M=[\alpha,b,c]\in V$ with $q(M)>0$ is given by the vanishing locus of the function
\[
F_M\colon \cH_p\times \cH_p \longrightarrow \bbC_p,\quad (\tau_1,\tau_2) \longmapsto c\tau_1\tau_2-\alpha\tau_1-\alpha'\tau_2+b.
\]

\subsection{Modular symbols}\label{sec: MS}
Denote by $L$ the $\Z$-lattice $V\cap \M_2(\cO_K)$ and similarly write $L[1/p]$ for the $\Z[1/p]$-lattice $V\cap \M_2(\cO_K[1/p])$.
Let $\chi_4$ be the odd Dirichlet character modulo 4, and for $M=[\alpha,b,c]\in L[1/p]$ put $\chi_4(M) = \chi_4(c)$.
Observe that $\chi_4(p)=1$, so the quantity $\chi_4(c)$ makes sense even if $c$ has denominators, simply by applying $\chi_4$ to the numerator of $c$.
Moreover, let $\Gamma\subseteq \SL_2(\cO_K[1/p])$ be the subgroup of matrices that are upper triangular modulo $2$.
Denote by $\Div^\dagger(X_p)$ the group of locally finite, rational quadratic divisors on $X_p$ as defined in \cite[Section 2.3.3]{DGL}.
For every integer $d>0$ that is not a norm of an element in $K$, it is proved in Section 2.6 of \textit{loc.cit.~}that the assignment
\begin{equation}\label{MSdiv}
(r,s)\longmapsto \sum_{\substack{ M\in L[1/p] \\ q(M)=d}} \chi_4(M)(\Delta_{M,\infty}\cap (r,s)) \cdot \Delta_{M,p}
\end{equation}
defines a $\Gamma$-invariant modular symbol with values in $\Div^\dagger(X_p)$.
Here, $\Delta_{M,\infty}\cap (r,s)$ is the intersection product between the cycle $\Delta_{M,\infty}$ and the hyperbolic geodesic path  $(r,s)$ joining $r$ and $s$ in $X_\infty$ as defined in \cite[Section 2.6]{DGL}.
The image of the modular symbol \eqref{MSdiv} under the boundary map \eqref{delta} is equal to the Kudla--Millson divisor $\mathscr{D}_{d,\Phi_{\chi_4}}$.

Let $\cM^\times$ be the group of multiplicative rigid meromorphic functions on $X_p=\cH_p\times \cH_p$.
For every positive integer $d$ that is not a norm from $K$, we consider the formal modular symbol $J_d$ for $\Gamma$ with values in $\cM^\times$ defined in \cite[\S5.2]{DGL}, which evaluated at a pair  $r,s\in \mathbb{P}^1(K)$ is given by
\begin{align}\label{eq: J d}
  J_d(r,s)(\tau_1,\tau_2) = \prod_{\substack{M\in L[1/p] \\ q(M)=d}}F_M(\tau_1,\tau_2)^{\chi_4(M)(\Delta_{M,\infty}\cap (r,s))}.
\end{align}
Its image under the divisor map is clearly equal to \eqref{MSdiv}.
For $\sD=\sum n_i(d_i)$ a formal sum of positive integers each not a norm from $K$, define
\begin{align}\label{eq: Phi D}
  J_\sD (r,s)= \prod_i J_{d_i}(r,s)^{n_i}.
\end{align}
As explained in \cite[\S5.2]{DGL}, if the assignment $(r,s)\mapsto J_{\sD}(r,s)$ is $\SL_2(\cO_K)$-equivariant, then the theory of modular symbols reduces the calculation of $J_\sD$ to that of $J_d(0,\infty)$. However, due to the presence of the character $\chi_4$ in the exponent of \eqref{eq: J d} one can not expect equivariance by the full $\SL_2(\cO_K)$, but only by the congruence subgroup
\[
  \Gamma_0(2)=\left\{\mtx x y z t \in\SL_2(\cO_K)\colon 2\mid z\right\}.
  \]
Therefore, the theory of modular symbols reduces the calculation of $J_\sD$ to that of $J_\sD(g_j0,g_j\infty)$ for some system of representatives $g_1,\dots,g_h$ of $\Gamma_0(2)\backslash \SL_2(\cO_K)$.

In \cite[\S5.2]{DGL}, conditions on $\sD$ are given in order that the formal product \eqref{eq: Phi D} converges when evaluated at the pair of cusps $(0,\infty)$. This is enough for computing values at small RM or small CM points. Indeed, for these points one needs to evaluate $J_\sD(r,s)$ where the pair $(r,s)$ has the special feature that the path $(r,s)$ can be written as
\begin{align*}
  (r,s) = \sum_j ( \gamma_j0, \gamma_j \infty) = \sum_j \gamma_j( 0,  \infty)
\end{align*}
for some matrices  $\gamma_j$ that belong to $\Gamma_0(2)$. But this is not true in general for an arbitrary pair of cusps $(r,s)$ and, in particular, for the cusps that appear when evaluating at big ATR points.

In order to evaluate the cocycle at an arbitrary pair of cusps $(r,s)$, one can proceed as follows. First of all, use the Manin trick to write 
\begin{align*}
  (r,s) = \sum_j ( \gamma_j 0, \gamma_j\infty )
\end{align*} 
for some $\gamma_j\in\SL_2(\cO_K)$. Let $\{g_1,\dots, g_h\}$ be a set of representatives for $\Gamma_0(2)\backslash \SL_2(\cO_K)$ and write $\gamma_j=\gamma_{j}' g_{k(j)}$, with $\gamma_j'\in \Gamma_0(2)$ and some $k(j)\in \{1,\dots,h\}$.
By $\Gamma_0(2)$-equivariance we have that
\begin{align*}
  J_\sD(\gamma_j' g_{k(j)} 0,\gamma_j' g_{k(j)}\infty)= (\gamma_j')^{-1}. J_\sD(g_{k(j)} 0, g_{k(j)}\infty).
\end{align*}
Therefore, knowing the values $J_\sD(g_j0,g_j\infty)$ for $j=1,\dots,h$ suffices to compute $J_\sD(r,s)$. In fact, even less information is required, a fact that is crucial for accelerating the computations. Indeed, since $[\SL_2(\cO_K)\colon \Gamma_0(2)]=6$, one might expect that the values at six pair of cusps are required; however, as we show in the next result, it is enough to know $J_\sD$  at only two pairs of elements in $\mathbb{P}^1(K)$ instead of six. For this, it will be important to consider also the opposite path to $(r, s)$, that we denote by $-(r,s)$.
\begin{lemma}
  Let $g = \smtx{i}{0}{i+1}{-i}\in \SL_2(\cO_K)$ and let $r,s\in\P^1(K)$. The path $(r, s)$ can be written as
  \begin{small}
  \begin{align}\label{r s as a sum}
(r, s)=    \sum_j \pm \gamma_j(0, \infty ) + \sum_k \pm \gamma_k (g0, g\infty)= \sum_j \pm \gamma_j(0, \infty ) + \sum_k \pm \gamma_k (0,\frac{1+i}{2}),
  \end{align}
\end{small}
for some matrices $\gamma_j,\gamma_k\in \Gamma_0(2)$; we remark that the $\pm$ sign in the above expression refers to the fact that each of the paths in the sum can be either as it appears or its opposite.
\end{lemma}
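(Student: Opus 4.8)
The plan is to combine the Manin continued‑fraction trick with an explicit analysis of the six cosets of $\Gamma_0(2)$ in $\SL_2(\cO_K)$. First I would invoke the Manin trick — legitimate because $\cO_K=\Z[i]$ is a Euclidean domain — to write $(r,s)=\sum_j(\gamma_j0,\gamma_j\infty)$ with $\gamma_j\in\SL_2(\cO_K)$, exactly as recalled in the paragraph preceding the statement; since paths obey $F(r,s)=-F(s,r)$ and $F(r,s)+F(s,t)+F(t,r)=0$, it then suffices to rewrite a single unimodular path $\gamma(0,\infty)$, with $\gamma\in\SL_2(\cO_K)$, in the asserted form. I would fix a set of coset representatives $g_1,\dots,g_6$ for $\Gamma_0(2)\backslash\SL_2(\cO_K)$, which is naturally parametrized by $\mathbb{P}^1(\cO_K/2\cO_K)$ through the class of the bottom row of $g$; writing $\gamma=\gamma'g_k$ with $\gamma'\in\Gamma_0(2)$ and using that left translation by $\gamma'$ preserves $\Gamma_0(2)$, one reduces to expressing each of the six paths $(g_k0,g_k\infty)$ as a $\Z$‑combination of terms $\pm\delta(0,\infty)$ and $\pm\eta(g0,g\infty)$ with $\delta,\eta\in\Gamma_0(2)$; recall here that $g0=0$ and $g\infty=\tfrac{1+i}{2}$.

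The key elementary input is that $\smtx{1}{\alpha}{0}{1}\in\Gamma_0(2)$ for every $\alpha\in\cO_K$, so that any unimodular path of the form $(0,\alpha)$ with $\alpha\in\cO_K$ is already acceptable, via $(0,\alpha)=(0,\infty)-\smtx{1}{\alpha}{0}{1}(0,\infty)$, as is $(\infty,0)=-(0,\infty)$. One can pick the representatives so that four of the six associated paths are of this type: for instance $\id$, $\smtx{0}{-1}{1}{0}$, $\smtx{1}{0}{1}{1}$ and $\smtx{-i}{0}{1}{i}$ realize the cosets whose paths are $(0,\infty)$, $(\infty,0)$, $(0,1)$ and $(0,-i)$, and these four cases are then immediate. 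It is no accident that only four cosets behave this way: the remaining two correspond to the points $(1+i:1)$ and $(1:1+i)$ of $\mathbb{P}^1(\cO_K/2\cO_K)$, and every representative of these two cosets unavoidably involves a cusp with denominator divisible by $1+i$ — it is precisely here that the extra generator $(g0,g\infty)=(0,\tfrac{1+i}{2})$ is needed, and indeed $g$ itself lies in the coset $(1+i:1)$. For the coset $(1:1+i)$ I would use the representative $\smtx{1}{i}{1}{1+i}$, whose path $(\tfrac{1+i}{2},1)$ decomposes as $(\tfrac{1+i}{2},0)+(0,1)=-(g0,g\infty)+\big((0,\infty)-\smtx{1}{1}{0}{1}(0,\infty)\big)$; for the coset $(1+i:1)$ I would use $\smtx{1}{0}{1+i}{1}$, whose path $(0,\tfrac{1-i}{2})$ equals $\delta(g0,g\infty)$ for the explicit element $\delta=\smtx{-i}{0}{-2i}{i}\in\Gamma_0(2)$, which fixes $0$ and sends $\tfrac{1+i}{2}$ to $\tfrac{1-i}{2}$.

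The step I expect to cost the most is this last one: one must confirm that the two ``ramified'' cosets $(1+i:1)$ and $(1:1+i)$ are genuinely covered by the single extra path $(g0,g\infty)$, which comes down to (i) producing the matrix $\delta$ above inside $\Gamma_0(2)$ and (ii) checking that the $g$ of the statement really sits in one of these two cosets. That one generator suffices for both is explained conceptually by the fact that reversal of paths interchanges these cosets — it acts on $\mathbb{P}^1(\cO_K/2\cO_K)$ by swapping coordinates — so that a representative for one yields, up to sign, a usable representative for the other. Everything else is finite bookkeeping in $\SL_2(\cO_K/2\cO_K)$ together with repeated use of the two path relations.
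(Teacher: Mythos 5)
Your proposal is correct and follows essentially the same route as the paper: reduce via the Manin trick to the six coset representatives of $\Gamma_0(2)\backslash\SL_2(\cO_K)$ and decompose each path $(g_k0,g_k\infty)$ explicitly into $\Gamma_0(2)$-translates of $(0,\infty)$ and $(0,\tfrac{1+i}{2})$. Your representatives and explicit matrices differ from the paper's but all check out (in particular $\delta=\smtx{-i}{0}{-2i}{i}$ does lie in $\Gamma_0(2)$ and sends $(0,\tfrac{1+i}{2})$ to $(0,\tfrac{1-i}{2})$), and the organizing remark via $\mathbb{P}^1(\cO_K/2\cO_K)$ is a pleasant conceptual addition rather than a different method.
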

\begin{proof}A direct computation shows that a system of representatives for $\Gamma_0(2)\backslash \SL_2(\cO_K)$ is given by:
  \begin{align*}
  g_1&= \smtx{1}{0}{0}{1},\, g_2=\smtx{0}{i}{i}{0},\, g_3=\smtx{i}{0}{i + 1}{-i},\\  g_4&=\smtx{i}{-i}{-2 i + 1}{ i - 1},\, g_5=\smtx{i}{0}{i + 2}{-i},\, g_6= \smtx{i}{0}{2 i - 1}{ -i}.
  \end{align*}
% \begin{align*}\begin{array}{lll}
%   g_1=  \left(\begin{array}{rr}
% 1 & 0 \\
% 0 & 1
% \end{array}\right), &g_2=\left(\begin{array}{rr}
% 0 & i \\
% i & 0
% \end{array}\right), &g_3=\left(\begin{array}{rr}
% i & 0 \\
% i + 1 & -i
% \end{array}\right),\\ g_4=\left(\begin{array}{rr}
% i & -i \\
% -2 i + 1 & i - 1
% \end{array}\right), &g_5=\left(\begin{array}{rr}
% i & 0 \\
% i + 2 & -i
% \end{array}\right), &g_6= \left(\begin{array}{rr}
% i & 0 \\
% 2 i - 1 & -i
% \end{array}\right).\end{array}
%   \end{align*}
  By the Manin trick, it is enough to find an expression of the form \eqref{r s as a sum} for the cusps of the form $(g_j0,g_j\infty)$. This is obviously true for $j=1,3$. For the rest, one can directly check that
  \begin{align*}
    (g_20,g_2\infty)&=-\smtx{i}{0}{0}{-i}(0,\infty)\\
    (g_40,g_4\infty)&=-\smtx{i-2}{1}{4}{-i-2}(0,\frac{1+i}{2})\\
    (g_50,g_5\infty)&=-\smtx{-i}{i}{-2}{i+2}(0,\infty)+ \smtx{i}{0}{2}{-i}(0,\infty)\\
    (g_60,g_6\infty)&=-\smtx{1}{i}{2i+2}{2i-1}(0,\infty)+\smtx{i}{0}{2i-2}{-i}(0,\infty).
  \end{align*}
\end{proof}

Following \cite[\S5.2]{DGL}, it is convenient for the algorithmic calculation to decompose
\[ J_d(r,s) = \prod_{j=0}^\infty\Phi_{d,j}(r,s),
\]
  where the $\Phi_{d,j}(r,s)$ are given by
\begin{align}\label{eq: Phi d j}
\Phi_{d,j}(r,s)=  \prod_{\substack{M\in L \\ q(M) =dp^{2j}}}F_M(\tau_1,\tau_2)^{\chi_4(M)\Delta_{M,\infty}\cap (r,s)}.
\end{align}
The set
\begin{align*}
   \Sigma_{d,j}(0,\infty) = \{ M\in L\,\vert\, q(M) =dp^{2j} \ \text{and} \ \Delta_{M,\infty}\cap (0,\infty)\neq 0 \}
\end{align*}
can be computed algorithmically. Indeed, by \cite[Lemma 5.1]{DGL} we have that
\begin{align}\label{eq: sigma d j zero infty}
\Sigma_{d,j}(0,\infty) = \{M=\smtx{\alpha}{-b}{c}{-\alpha}\in M_2(\cO_K) \colon \alpha\alpha'-bc=p^{2j}, \, bc<0\},
\end{align}
which is obviously finite and effectively computable since there are only finitely many possibilities for $b$ and $c$, and for every choice of $b$ and $c$ there are finitely many possibilities for $\alpha$.
For any pair of elements $r,s\in \mathbb{P}^1(K)$, the set
\begin{align*}
  \Sigma_{d,j}(r,s) = \{ M\in  M_2(\cO_K)\, \vert\, q(M) =dp^{2j}  \ \text{and} \ \Delta_{M,\infty}\cap (r,s)\neq 0 \}
\end{align*}
can also be computed, using the following two properties of the intersection product:
\begin{align*}
&\Delta_{M,\infty}\cap (r,s) = \Delta_{\gamma M,\infty}\cap (\gamma r,\gamma s) \text{ for all }\gamma\in \SL_2(\cO_K)\ \mbox{and} \\%\label{eq: prop 1 of intersection}
 & \Delta_{M,\infty}\cap (r,t) = \Delta_{M,\infty}\cap (r,s) +\Delta_{M,\infty}\cap (s,t),\ \mbox{for all}\ r,s,t\in\P_1(K).% \label{eq: prop 2 of intersection}
\end{align*}
Indeed, using the Manin trick, we write
  \begin{align*}
    (r,s) = \sum_{j=1}^n \gamma_j(0,\infty) \text{ with } \gamma_j\in\SL_2(\cO_K).
  \end{align*}
  Using the two properties above, we see that
  \begin{align}\label{eq}
    \Delta_{M,\infty}\cap (r,s) =\sum \Delta_{M,\infty}\cap (\gamma_j 0,\gamma_j \infty)=\sum \Delta_{\gamma_j^{-1}M,\infty}\cap ( 0, \infty).
  \end{align}
  From this, we deduce that $\gamma_j^{-1}M = M_0$ for some $M_0\in\Sigma_{d,j}(0,\infty)$ if $\Delta_{M,\infty}\cap (r,s)\neq 0$. Therefore, we can compute $\Sigma_{d,j}(r,s)$ running over matrices $M\in \cup_{i=1}^n\gamma_j\Sigma_{d,j}(0,\infty)$ (there are finitely many of these and we can compute them since we can compute $\Sigma_{d,j}(0,\infty)$), and for each $M$ we compute $\Delta_{M,\infty}\cap (r,s)$ using \eqref{eq}; if $\Delta_{M,\infty}\cap (r,s)\neq 0$ we put $M$ in $\Sigma_{d,j}(r,s)$ and if $\Delta_{M,\infty}\cap (r,s)=0$ we discard it.

By the Manin trick and $\Gamma_0(2)$-equivariance, it is therefore enough to compute $J_\sD(0,\infty)$ and $J_\sD(g 0,g \infty)$ for $g = \smtx{i}{0}{i+1}{-i}$ in order to compute the cocycle $J_\sD$.

\subsection{Convergence criterion}
In \cite[\S5.2]{DGL}, conditions on the divisors $\sD$ are found to ensure that $J_\sD(0,\infty)$ converges. We next show that one extra condition is needed to guarantee that $J_\sD(g0,g\infty)$ also converges. Define the weight $\operatorname{wt}(\Phi_{d,j}(r,s))$ of $\Phi_{d,j}(r,s)$ to be the sum of the exponents that appear in \eqref{eq: Phi d j}.
The following lemma compares the weights of $\Phi_{d,j}(0,\infty)$ and $\Phi_{d,j}( g  0, g  \infty)$.

\begin{lemma}\label{lemma:degree} We have:
\[
\operatorname{wt}(\Phi_{d,j}(0,\infty))=
(-1)^d\operatorname{wt}(\Phi_{d,j}( g  0, g  \infty)).
\]
\end{lemma}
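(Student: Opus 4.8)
The plan is to rewrite both weights as $\chi_4$-twisted intersection numbers against the single geodesic $(0,\infty)$ and then compare them termwise. Unwinding \eqref{eq: Phi d j}, the weight is $\operatorname{wt}(\Phi_{d,j}(r,s)) = \sum_{M\in L,\, q(M)=dp^{2j}} \chi_4(M)\,\big(\Delta_{M,\infty}\cap(r,s)\big)$. Applying the equivariance $\Delta_{M,\infty}\cap(\gamma r,\gamma s)=\Delta_{\gamma^{-1}*M,\infty}\cap(r,s)$ with $\gamma=g$ gives $\Delta_{M,\infty}\cap(g0,g\infty)=\Delta_{g^{-1}*M,\infty}\cap(0,\infty)$. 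Since the $*$-action of $g\in\SL_2(\cO_K)$ is an isometry of $(V,q)$ sending $M_2(\cO_K)$ to itself, the assignment $N=g^{-1}*M$ is a bijection of $\{M\in L : q(M)=dp^{2j}\}$ onto itself, so
\[
\operatorname{wt}(\Phi_{d,j}(g0,g\infty)) = \sum_{\substack{N\in L\\ q(N)=dp^{2j}}} \chi_4(g*N)\,\big(\Delta_{N,\infty}\cap(0,\infty)\big).
\]
Hence it suffices to prove the pointwise identity $\chi_4(g*N)=(-1)^d\chi_4(N)$ for every $N\in L$ with $q(N)=dp^{2j}$; summing against $\Delta_{N,\infty}\cap(0,\infty)$ then gives $\operatorname{wt}(\Phi_{d,j}(g0,g\infty))=(-1)^d\operatorname{wt}(\Phi_{d,j}(0,\infty))$, which is the asserted formula.

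To prove the identity I would compute $g*N=gN(g')^{-1}$ explicitly. Writing $N=[\alpha,b,c]$ with $\alpha=x+iy\in\cO_K$, a direct matrix multiplication shows that the lower-left entry of $g*N$ — the coordinate to which $\chi_4$ is applied — equals
\[
c' = \Tr_{K/\Q}\big((-1+i)\alpha\big) + 2b + c = c + 2(b-x-y).
\]
Because $\chi_4$ depends only on the class modulo $4$ and vanishes on even integers, the case $c$ even is immediate: then $c'$ is even too and both sides of the claimed identity vanish. When $c$ is odd, $\chi_4(c')=(-1)^{\,b+x+y}\chi_4(c)$, since adding $2$ times an integer of parity $b+x+y$ to an odd number multiplies $\chi_4$ by $(-1)^{\,b+x+y}$.

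It remains to check that $b+x+y\equiv d\pmod 2$ whenever $c$ is odd. Here the norm equation enters: $q(N)=\No_{K/\Q}(\alpha)-bc=x^2+y^2-bc=dp^{2j}$, and reducing modulo $2$ — using $x^2\equiv x$, $y^2\equiv y$, and that $p$, hence $p^{2j}$, is odd — yields $x+y+bc\equiv d\pmod 2$; since $c$ is odd this is exactly $x+y+b\equiv d\pmod 2$. Combining the last two paragraphs gives $\chi_4(g*N)=(-1)^d\chi_4(N)$ for all relevant $N$, and the lemma follows after multiplying the displayed weight identity by $(-1)^d$.

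The only computational step is extracting the lower-left entry of $gN(g')^{-1}$; everything else is parity bookkeeping together with the reciprocity already recorded for the intersection pairing, so I do not anticipate a genuine obstacle. The one point requiring care is matching the matrix conventions (the identification $[\alpha,b,c]=\smtx{\alpha}{-b}{c}{-\alpha'}$ and the action $g*M=gM(g')^{-1}$) so that the sign of the trace term, and hence of $(-1)^d$, comes out correctly.
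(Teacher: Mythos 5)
Your proposal is correct and follows essentially the same route as the paper: both reduce to comparing $\chi_4$ of the lower-left entry $c'=c+2(b-x-y)$ of $g*M$ with $\chi_4(c)$, using the norm equation $x^2+y^2-bc=dp^{2j}$ to control the relevant parity. Your mod-$2$ reduction $x+y+b\equiv d\pmod 2$ just packages the paper's four-case parity analysis into a single congruence.
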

\begin{proof}On the one hand, as in the proof of \cite[Lemma 5.2]{DGL} we have that
  \begin{align*}
    \operatorname{wt}(\Phi_{d,j}( 0, \infty))    =\sum_{\substack{\alpha\alpha'-bc=dp^{2j} \\ bc<0}}\chi_4(c)\operatorname{sign}(c).
  \end{align*}
  On the other hand,
  \begin{align*}
    \operatorname{wt}(\Phi_{d,j}( g  0, g  \infty)) 
		&= \sum_{\substack{M\in \Sigma_{d,j}(r,s)\\ q(M) =dp^{2j}}}\chi_4(M)(\Delta_{M,\infty}\cap ( g  0, g  \infty))\\ 
		&=\sum_{\substack{M\in  \Sigma_{d,j}(0,\infty) \\ q(M) =dp^{2j}}}\chi_4( g  M)(\Delta_{ g  M,\infty}\cap ( g  0, g  \infty))\\ 
		&=\sum_{\substack{M\in  \Sigma_{d,j}(0,\infty) \\ q =dp^{2j}}}\chi_4( g  M)(\Delta_{ M,\infty}\cap ( 0, \infty))\\
     &=\sum_{\substack{\alpha\alpha'-bc=dp^{2j} \\ bc<0}}\chi_4(c')\operatorname{sign}(c),
  \end{align*}
 where for a matrix  $M=\smtx{\alpha}{-b}{c}{-\alpha'}= \smtx{r+is}{-b}{c}{-r+is}$ we denote by $c'$  the lower left entry of $ g  M$. A direct computation shows that
 \begin{align}\label{eq: c prime}
    c' = -2(r+s)+2b+c.
  \end{align}
  We next show that $\chi_4(c')=\chi_4(c)$ if $d$ is even and that $\chi_4(c')=-\chi_4(c)$ if $d$ is odd, from which the result follows immediately. To begin with, clearly $c$ is even if and only if $c'$ is even, so that $\chi_4(c)=0$ if and only if $\chi_4(c')=0$.  So we can assume from now on that $c$ is odd. 

  Suppose now that $d$ is even. If $b$ is even, the fact that
  \begin{align}\label{eq: r2 + s2}
    r^2+s^2-b c= d p^{2j}
  \end{align}
  implies that $r$ and $s$ have the same parity, and then \eqref{eq: c prime} shows that $c'\equiv c\pmod 4$. If $b$ is odd, then $r$ and $s$ have different parity, and \eqref{eq: c prime} implies again that $c'\equiv c\pmod 4$. 

  Finally, suppose that $d$ is odd. If $b$ is even, \eqref{eq: r2 + s2} forces $r$ and $s$ to have different parity and  \eqref{eq: c prime} then gives $c'\equiv -c\pmod 4$. If $b$ is odd, $r$ and $s$ have the same parity and then \eqref{eq: c prime} also gives $c'\equiv -c\pmod 4$
\end{proof}

Let $\sD = \sum_i n_i (d_i)$ be a divisor and put $\Phi_{\sD,j}=\prod_i \Phi_{d_i,j}^{n_i}$. By \cite[Lemma 5.2]{DGL}, we have that
\[
  \mathrm{wt}(\Phi_{d,j}(0,\infty))= 16\sigma(dp^{2j}),\text{ where }\sigma(n)=\sum_{4\nmid e\mid n}e.
\]
Combining this with Lemma \ref{lemma:degree}, we see that if
\begin{align}\label{eq:cond1}
 \sum_i n_i \sigma(d_i)= 0 \text{ and }  \sum_i (-1)^{d_i}n_i \sigma(d_i)= 0 ,
\end{align}
then both $\Phi_{\sD,j}(0,\infty)$ and $\Phi_{\sD,j}(g0,g\infty)$  have weight 0 for all $j$. Exactly as in \cite{DGL}, however, this weight-zero condition is not sufficient to guarantee convergence of the infinite products $\prod_{j}\Phi_{\sD,j}(0,\infty) $ and $\prod_{j}\Phi_{\sD,j}(g0,g\infty) $. The Borcherds theory developed in \cite{DGL} suggests that the additional obstruction comes from cusp forms of weight 2 and level $4p$.

Indeed, if $f=\sum_{n}a_n(f)q^n$ denotes the $q$-expansion of such a form, one must also impose that
\begin{align}\label{eq:cond2}
  \sum_i n_i a_{d_i}(f) = 0
\end{align}
for all normalized cusp forms $f\in S_2(4p)$.

We have numerically tested the convergence of the products defining $J_\sD$ for various divisors $\sD$, and our experimental data is consistent with the assertion that conditions \eqref{eq:cond1} and \eqref{eq:cond2} are sufficient for convergence.

\subsection{Overconvergent algorithm} Observe that computing $\Sigma_{d,j}(0,\infty)$ by running over all possible $b$ and $c$ in \eqref{eq: sigma d j zero infty} results in an algorithm of exponential complexity on $j$. That is, it is exponential on the desired $p$-adic accuracy to which $J_d(r,s)$, and hence $J_\sD(r,s)$, are computed. Therefore, this is only feasible for small values of $j$. To compute $J_\sD(r,s)$ we use the same iterative method of \cite{DGL}, which is similar to those of \cite{darmon-vonk}, \cite{darmon-pollack}, or \cite{gmx}. With this method, it is enough to compute $\Sigma_{d,j}(0,\infty)$ for $j=0,1$ to compute $\Phi_{\sD,0}(r,s)$ and $\Phi_{\sD,1}(r,s)$, and then  we obtain $ J_\sD^{N}(r,s) = \prod_{j=0}^{N} \Phi_{\sD,j}(r,s)$ by the formula
\[
  J_\sD^{N}(r,s) =\Phi_{\sD,0}(r,s) \prod_{j=0}^{N-1} U_p^{2j}   (\Phi_{\sD,1}(r,s)) ,
\]
where $U_p$ is the usual $U_p$-operator.
Observe that, by \eqref{eq: Phi d j}, in the definition of $J_\sD$ only matrices whose determinant has even $p$-adic valuation are involved. We can consider also a similar definition but using those with odd $p$-adic valuation of the determinant as follows:
\[
  J_\sD^{N,\mathrm{odd}}(r,s) = \prod_{j=0}^{N-1} U_p^{2j}   (\Phi_{\sD,1/2}(r,s)) .
\]
As we will see in \S\ref{sec: experiments}, both the cocycles $J_\sD$ and $J_\sD^\mathrm{odd}$ seem to produce algebraic quantities when evaluated at special points.

\section{Evaluation at special points}\label{sec: evaluation}
In this section we explain how to evaluate the rigid meromorphic cocycle $J_\sD$ of Section \ref{sec: modular symbol} at arguments coming from certain ATR fields, the  so-called \emph{big ATR points}. This section is inspired by \cite{darmon-logan}, which evaluates Hilbert modular forms at analogous quantities arising from ATR fields.
But first, let us briefly recall how to evaluate rigid meromorphic cocycles at small RM points and small CM points.

\subsection{Evaluation at small RM and CM points}
Let $E$ a real quadratic number field, in which the prime $p$ is inert.
Fix a generator $u$ of the group of norm-one units of $\cO_E$.
By choosing a $\Z$-basis of $\cO_E$, the element $u$ gives rise to a matrix $\gamma_u\in\SL_2(\Z)$.
Since $p$ is inert in $E$, it follows that $\gamma_u$ has exactly two fixed points $\tau_1,\tau_2\in \cH_p$,
which can be computed by solving quadratic equations.
The matrix $\gamma_u$ also acts on the $p$-adic symmetric space $X_p\cong \cH_p \times \cH_p$ via the embedding $\SL_2(\Z)\subseteq \SL_2(K)$.
The four fixed points of this action are given by $(\tau_i,\tau_j)$, $i,j\in\{1,2\}$.
The \emph{reflex field} $E_\tau$ of such a fixed point $\tau=(\tau_i,\tau_j)$ is a quadratic number field contained in the compositum $EK$.
More precisely, it is equal to $E$ if $i=j$, in which case we call $\tau$ a \emph{small RM point}.
Otherwise, $\tau$ is a \emph{small CM point}, and $E_\tau$ is the imaginary quadratic subfield of $EK$ in which $p$ is inert.
In any case, we define the evaluation of $J_\sD$ at $\tau$ to be
\[
  J_\sD[\tau] \coloneq J_\sD(\infty,\gamma_u \infty)(\tau).
\]
One can similarly evaluate the cocycle $J_\sD^\mathrm{odd}$ by means of
\[
  J_\sD^{\mathrm{odd}}[\tau] \coloneq J_\sD^{\mathrm{odd}}(\infty,\gamma_u \infty)(\tau).
\]

\subsection{Evaluation at ATR points}
Let $D>0$ be a fundamental discriminant and let $n$ be a positive integer. Let $F = \Q(\sqrt{D})$ and suppose that $F$ contains an element $\alpha$ of norm $-1$. Define $E = F(\sqrt{n\alpha })$. Since $n\alpha $ has negative norm, the extension $E/F$ is ATR; that is, $E$ has signature $(1,1)$. The field $E$ is not Galois over $\Q$, and if we denote by $\cM$ its Galois closure we have that $\Gal(\cM/\Q)\simeq \mathrm{D_{2\cdot 4}}$, the dihedral group having 8 elements. The field $\mathcal M$ contains $\Q(\sqrt{n\alpha}\sqrt{n\alpha'})$, where $\alpha'$ denotes the $\Gal(F/\Q)$-conjugate of $\alpha$. Since $\alpha\alpha'=-1$, this field is $K=\Q(i)$. It turns out that, in fact, $\cM = E\cdot K$ and that the lattice of subfields of $\cM$ is of the following form:

\begin{equation*}%\label{eq: diagram}
\xymatrix{
         &           &  \cM \ar@{-}[d]\ar@{-}[dll]\ar@{-}[drr] \ar@{-}[dl]\ar@{-}[dr]     &         &           \\
E   & E'   &      F(i)      &    L    &    L'     \\
  &  F=\Q(\sqrt{D})\ar@{-}[ul]\ar@{-}[u]\ar@{-}[ur] & \Q(\sqrt{-D}) \ar@{-}[u] & K=\Q(i)
\ar@{-}[ul]\ar@{-}[u]\ar@{-}[ur]\\
    & &\Q \ar@{-}[ul]\ar@{-}[u]\ar@{-}[ur] & &
}
\end{equation*}
The field $E'$ is conjugate to $E$. The fields $L$ and $L'$ are characterized by being the only quartic non-Galois totally complex fields contained in $\cM$.

The group of units of $E$ whose relative norm to $F$ is equal to $1$ is of rank $1$.
Let $u\in \cO_E$ be a generator of this group, which is unique up to torsion and inversion.
As in the case of small special points, we would like to view $u$ as an element of $\SL_2(K)$.
For this, consider $\tilde{u}\coloneq\No_{\cM/L}(u)$; that is, take the norm from $\cM$ down to $L$. Since $L/K$ is a quadratic extension, by fixing an $\cO_K$-basis of $\cO_L$ the element $\tilde{u}\in L$ gives rise to a matrix $\gamma_u\in \SL_2(\cO_K)$.

The action of $\SL_2(K)$ on $V$ is by twisted conjugation, so a fixed point $x$ for $\gamma_u$ on the $E$-valued points of the quadric $Q$ of isotropic lines is represented by a matrix $M\in V \otimes E$ such that
\begin{align}\label{eq: fixed}
  \gamma_u\cdot M \cdot (\gamma_u')^{-1} = \lambda \cdot M
\end{align}
  for some $\lambda\in E$.
 One can find the matrix $M$ effectively and algorithmically by interpreting \eqref{eq: fixed} as a system of equations where the variables are $\lambda$ and the entries of $M$, together with the extra equation $\det M = 0$.
Since $M$ is a matrix of rank $1$, which is not upper triangular, we can always scale it to have the form 
  \[
M=\left(\begin{array}{rr}
  (\tau_1,\tau_2) & -\tau_1\tau_2\\ 1 &-(\tau_2,\tau_1)
\end{array}\right)
\]
for some $\tau_1,\tau_2\in \C_p$.
We consider pairs of $D$ and $n$ such that the fixed point $\tau\in Q(E)\subseteq Q(\C_p)$ corresponding to $M$ defines a point on $X_p$ or, equivalently, such that $\tau_1,\tau_2$ do not belong to $\Q_p$.
In that case, $\tau\in X_p$ is called a \emph{big ATR point}.
Its \emph{reflex field} $E_\tau$ is the ATR field $E$.
As in the case of small special points, we define the evaluation of $J_\sD$ at the big ATR point $x$ associated to $u$ to be
\begin{align*}
  J_\sD[\tau] \coloneq &J_\sD(\infty,\gamma_u \infty)(\tau_1,\tau_2)\\
	\intertext{and, similarly, we put}
  J_\sD^{\mathrm{odd}}[\tau] \coloneq &J_\sD^{\mathrm{odd}}(\infty,\gamma_u \infty)(\tau_1,\tau_2).
\end{align*}

\subsubsection{Algebraicity conjecture}\label{sec:alg conj}
The main conjecture of \cite{DGL} predicts that the quantity $j=J_\sD[\tau]$, which in principle belongs to $\C_p$, is in fact an algebraic number for every special point $x\in X_p$ as described above.
More precisely, by the conjectural reciprocity law of \cite[Section 4.4]{DGL} it should be contained in the compositum of
\begin{itemize}
\item the field of definition of the rigid meromorphic cocycle, which by Proposition \ref{fieldofdefprop} is equal to $\Q(i)$, and
\item an abelian extension $A$ of $E$, which is unramified outside $2$.
Moreover, let $F_\tau\subseteq E_\tau$ denote the unique subfield of the reflex field of $\tau$ with $[E_\tau : F_\tau]=2$.
Then $A/F_\tau$ is Galois and $\Gal(E_\tau, F_\tau)$ acts on $\Gal(A/E_\tau)$ via inversion.
\end{itemize}
In other words, $E_\tau(j)/E_\tau$ is conjectured to be an abelian extension unramified outside $2$.
Moreover, the only subextensions of the Galois closure of $E_\tau(j)/F_\tau$ that are abelian over $F_\tau$ are polyquadratic.
Since the field of definition of the rigid meromorphic cocycle is $\Q(i)$, we expect $\Q(i)$ to be a subfield of that Galois closure frequently.

\section{Experimental evidence}\label{sec: experiments}
We have computed the quantities $J_\sD[\tau]$ and $J_\sD^{\mathrm{odd}}[\tau]$ for primes $p=5,13,17$ and  for several divisors $\sD$, fundamental discriminants $D$ and positive integers $n$, to a certain $p$-adic precision. For $p=5$ we computed to 300 $p$-adic digits, for $p=13$ to 150 $p$-adic digits and for $p=17$ to 130 $p$-adic digits. The code used to perform the calculations can be found in the repository \cite{Masdeu}.

The complete computational results are too extensive to be included in full in this article. A comprehensive table containing all data is available online in \cite{Masdeu2}. Here, we present only a summary of the most relevant findings together with partial tables that can be found in Appendix \ref{appendix}. In the table of \cite{Masdeu2}, the column \texttt{label} refers to the divisor $\sD$ and the parity of the cocycle; for example the label \texttt{6\_24\_even} corresponds to the cocycle $J_\sD$ with $\sD=(6)-(24)$, the label \texttt{6·6\_14\_odd} to the cocycle $J_\sD^{\mathrm{odd}}$ with $\sD = 2 (6) -(14)$, the label \texttt{3·3·15\_21\_odd} corresponds $J_\sD^\mathrm{odd}$ with  $\sD=2( 3) + (15)-(21)$, and so on. To lighten the notation, from now on we will only speak of the cocycles of the form $J_\sD$, with the understanding that identical computations have been carried out for those of the form $J_\sD^\mathrm{odd}$.

The column labeled as \texttt{type} indicates whether the computed value corresponds to a big ATR point, a small CM point or a small RM point. Additional columns display the values of $D$ and $n$. In the case of big ATR points, these are the parameters $D$ and $n$ defined in Section \ref{sec: evaluation}; for small special points, $D$ is the absolute value of the discriminant of the reflex field and $n$ is not relevant for these cases (and is set to 1 in the table).  Recall that we have chosen $p$ to be split in $K$, say as $p = \fp_1\fp_2$. 
In order to simplify the computations, we only considered parameters of $p$, $D$, and $n$ for which $\fp_1$ and $\fp_2$ are unramified in $L$, so that $L_{\fp_i}$ is the quadratic unramified extension of $\Q_p$.

Instead of displaying the computed value  $J_\sD[\tau]$, what is shown in the table are the two related quantities
 \begin{align*}
   J_\sD[\tau]^{\mathrm{triv}} \coloneq {J_\sD[\tau]}\cdot{\overline{J_\sD[\tau]}}\quad \mbox{and}\quad    J_\sD[\tau]^{\mathrm{conj}} \coloneq \frac{J_\sD[\tau]}{\overline{J_\sD[\tau]}},
 \end{align*}
 where the bar denotes the non-trivial conjugation of $L_{\fp_i}/\Q_p$. The column labeled as \texttt{J} indicates then either the value of $J_\sD[\tau]^{\mathrm{triv}}$ or $J_\sD[\tau]^{\mathrm{conj}}$ (depending on the value of the column labelled as \texttt{char}), coded as an integer.

 The column  \texttt{trivial} displays whether the computed value $J_\sD[\tau]^{\bullet}$ (for $\bullet \in \{\mathrm{triv},\mathrm{conj}\}$) is equal to $1$ up to the working precision. The column \texttt{recognized} is set to \texttt{True} whenever we have been able to recognize $J_\sD[\tau]^{\bullet}$ as a probable algebraic integer. This means that we have found a polynomial $P(x)\in\Q[x]$ such that $P(J_\sD[\tau]^{\bullet})=0$ up to the working $p$-adic precision, and in this case $P(x)$ is displayed in the column \texttt{poly}. In other words,  $J_\sD[\tau]^{\bullet}$ is $p$-adically close to an algebraic element $J^{\mathrm{alg}}$. The column \texttt{field} displays a defining polynomial for the field $\Q(J^{\mathrm{alg}})$, while the column \texttt{factor} displays the factorization of the norm of $J^{\mathrm{alg}}$ down to $\Q$.

 Of course, there always exist many rational polynomials $P(x)\in\Q[x]$ such that $P(J_\sD[\tau]^{\bullet})=0$ up to the working $p$-adic precision. We consider $J_\sD[\tau]^{\bullet}$ to be \emph{recognized} as algebraic if either
 \begin{enumerate}[(i)]
 \item  we find a polynomial with very small height compared to the working precision, or
   \item  we find a polynomial with large height but such that the support of the rational number $\mathrm{Nm}_{\Q(J^{\mathrm{alg}})/\Q}(J^{\mathrm{alg}})$ consists of a small number of small primes.
 \end{enumerate}
These cases are indicated in the tables as \texttt{(a)} and \texttt{(l)}, respectively.

In Appendix \ref{appendix} we include the data corresponding to the fields generated by the values $J^\mathrm{alg}$ and their factorization  for the 190 recognized big ATR special values. Table~\ref{tbl:table-p5} presents the data for the prime $p=5$, Table \ref{tbl:table-p13} for $p=13$, and Table \ref{tbl:table-p17} for $p=17$.

Examining the data in table \cite{Masdeu2},  we observe that the quantities $J_\sD[\tau]^{\mathrm{triv}}$ and $J_\sD[\tau]^{\mathrm{odd,triv}}$ are equal to $1$ (up to the working precision) in most cases, though not in all. There are 13 instances where $J_\sD[\tau]^{\mathrm{triv}}$ is non-trivial and 13 where $J_\sD[\tau]^{\mathrm{odd,triv}}$ is non-trivial; we have not been able to recognize any of these non-trivial quantities as algebraic numbers. The quantities $J_\sD[\tau]^{\mathrm{conj}}$ and $J_\sD[\tau]^{\mathrm{odd,conj}}$, on the other hand, are different from 1 in many cases. Among these, in 190 instances we have been able to identify the non-trivial value as a probable algebraic integer $J^{\mathrm{alg}}$, and partial data for these cases is displayed in Tables \ref{tbl:table-p5}, \ref{tbl:table-p13} and \ref{tbl:table-p17} in Appendix \ref{appendix}.

\begin{obs}\label{obs: BY}
  Interestingly enough, in all the examples that we found, the primes $\ell$ in the support of $\mathrm{Nm}_{\Q(J^{\mathrm{alg}})/\Q}(J^{\mathrm{alg}})$ satisfy the following condition:
  \begin{equation}\label{eq: cond l}
    \begin{split}
      & 4\ell p\mid s^2q-n^2 \text{ for some } s\in \mathrm{supp}(\sD)  \text{ and some integer  } |n|<s\sqrt{q},\\ & \text{ where } q = \mathrm{Nm}_{\Q(i)/\Q}(d_{L/\Q(i)}) \text{ and } d_{L/\Q(i)}\text{ is the discriminant of }  L/\Q(i).
 \end{split}
  \end{equation}
    This condition can be interpreted as a $p$-adic analogue of \cite[Corollary 1.3]{BY} for the primes appearing in the support of CM values of Hilbert modular forms defined over real quadratic fields. In fact, condition \eqref{eq: cond l} is formally the same as \cite[Corollary 1.3]{BY} if we replace (in the notation of \textit{loc.cit.}) the values $m$ such that $\tilde{c}(-m)=0$ by the integers $s$ in the support of $\sD$, the CM field by $L$, and the discriminant of the real quadratic base field by the prime $p$.
\end{obs}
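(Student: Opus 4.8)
The plan is to deduce Observation~\ref{obs: BY} from a Gross--Zagier/Bruinier--Yang type factorization formula for the special values $J_\sD[\tau]$, adapted to the rigid-analytic framework of \cite{DGL}. The first step would be to reinterpret the algebraic number $J^{\mathrm{alg}}$ — in the relevant cases, the algebraic avatar of $J_\sD[\tau]^{\mathrm{conj}}$ — as an arithmetic intersection quantity: the pairing of the Kudla--Millson divisor class $\sum_i n_i\,\mathscr{D}_{d_i,\Phi_{\chi_4}}$ against the CM $0$-cycle attached to the big ATR point $\tau$. Since $J_\sD$ has divisor $\sum_i n_i\,\mathscr{D}_{d_i,\Phi_{\chi_4}}$ by construction (see \eqref{MSdiv}--\eqref{eq: J d}), and since $F_M(\tau_1,\tau_2)$ computes, up to a scalar, the pairing of $M$ with the isotropic vector $[(\tau_1,\tau_2),\tau_1\tau_2,1]$ representing $\tau$, a prime $\ell\neq p$ can divide $\mathrm{Nm}_{\Q(J^{\mathrm{alg}})/\Q}(J^{\mathrm{alg}})$ only when one of the special divisors $\Delta_{M,p}$ occurring in the product $J_\sD[\tau]$ passes through $\tau$ modulo a prime above $\ell$; the archimedean and the $p$-adic places contribute only units and are to be discarded.

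The body of the argument would then be a local computation at each finite $\ell\neq p$. One would show that the $\ell$-part of $\mathrm{Nm}_{\Q(J^{\mathrm{alg}})/\Q}(J^{\mathrm{alg}})$ is a sum, over $s\in\mathrm{supp}(\sD)$ and $j\geq 0$, of local intersection multiplicities of $\Delta_{M,p}$ with $\tau$ for matrices $M\in L[1/p]$ with $q(M)=sp^{2j}$, and that such a multiplicity vanishes unless the lattice $L[1/p]$ represents $sp^{2j}$ by a vector that is orthogonal, $\ell$-adically, to the vector representing $\tau$. The $\ell$-adic shape of this vector is controlled by the CM order attached to $\tau$, whose conductor is governed by the relative discriminant $d_{L/\Q(i)}$, hence by $q=\mathrm{Nm}_{\Q(i)/\Q}(d_{L/\Q(i)})$. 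Unwinding the orthogonality-mod-$\ell$ condition in these terms — the factor $p^{2j}$ being absorbed since $\ell\neq p$, the factor $4$ coming from the level-$2$ structure of $\Gamma$, and the factor $p$ entering because the whole construction lives at $p$ and $\tau$ is required to be ``big'' (i.e. $\tau_1,\tau_2\notin\Q_p$) — should produce exactly the congruence $n^2\equiv s^2q\pmod{4\ell p}$ for some integer $n$ with $n^2<s^2q$, which is \eqref{eq: cond l}. It then remains to match the formula with \cite[Corollary~1.3]{BY} under the dictionary stated in the Observation: the Fourier coefficients of the Borcherds input of \textit{loc.\ cit.} correspond to the multiplicities $n_s$ at the $s\in\mathrm{supp}(\sD)$, the CM field of \textit{loc.\ cit.} to $L$, and the discriminant of the real quadratic base field to the prime $p$.

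The main obstacle is that the first step rests on arithmetic theory that is itself still conjectural: there is as yet no integral model for $X_p$ or for the divisors $\mathscr{D}_{d,\Phi}$, no Arakelov/Gillet--Soulé formalism in which the ``arithmetic intersection number'' is defined, and — most basically — no proof that $J_\sD[\tau]$ is algebraic in signature $(3,1)$, so that even the rational number $\mathrm{Nm}_{\Q(J^{\mathrm{alg}})/\Q}(J^{\mathrm{alg}})$ whose support is under study is for the moment only available empirically. A rigorous proof of Observation~\ref{obs: BY} would therefore have to be carried out within a full $p$-adic Kudla program in this signature; what the sketch above supplies is a derivation of the precise form of \eqref{eq: cond l} from that (conjectural) framework, together with the check that it is formally identical to the archimedean prototype of Bruinier and Yang. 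Granting the framework, the remaining local density computation should be routine, following the pattern of \cite{BY} together with the spinor-norm analysis of Section~\ref{sec: field of def}, which already pins down $\Q(i)$ as the field of definition of the cocycle and hence the exact role of the prime $2$ in the ramification.
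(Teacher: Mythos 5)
The statement you are trying to prove is not a theorem of the paper but an empirical observation: it asserts that, in each of the $190$ recognized examples, every prime $\ell$ in the support of $\mathrm{Nm}_{\Q(J^{\mathrm{alg}})/\Q}(J^{\mathrm{alg}})$ satisfies \eqref{eq: cond l}. The paper's justification is direct verification against the computed data and nothing more --- see the worked example immediately after the Observation, where the seven primes $659,\dots,13099$ are checked one by one against the $101$ primes allowed by the condition (further cut down to $81$ by the norm-one constraint on $J_\sD[\tau]^{\mathrm{conj}}$). No theoretical argument is given or claimed; the specific shape of \eqref{eq: cond l} is motivated purely by its formal identity with \cite[Corollary 1.3]{BY} under the stated dictionary. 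Your proposal takes a genuinely different and far more ambitious route: a derivation from a $p$-adic arithmetic-intersection (Kudla-program) formalism. You are candid that every input to that derivation is currently conjectural --- no integral model of $X_p$, no intersection pairing, not even a proof that $J_\sD[\tau]$ is algebraic --- and this candour is warranted: as a proof of the literal statement your argument does not close, while the finite computation that actually establishes the statement is never performed in your write-up.

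There is also a local gap inside the heuristic itself. The step where the orthogonality-mod-$\ell$ condition ``should produce exactly'' the congruence $n^2\equiv s^2q\pmod{4\ell p}$ is asserted rather than computed, and your proposed provenance for the modulus does not match the dictionary the Observation spells out. In that dictionary the prime $p$ replaces the \emph{discriminant of the real quadratic base field} of \cite{BY} --- a structural substitution of the base --- not something forced by $\tau_1,\tau_2\notin\Q_p$; and the factor $4$ in \cite{BY} arises from the CM order, not from the level-$2$ structure of $\Gamma$. Pinning down the exact modulus $4\ell p$ is precisely the content one would have to supply for the heuristic to carry any weight, so even as a plausibility argument the derivation of \eqref{eq: cond l} remains incomplete. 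If you want to ``prove'' the Observation as the paper means it, the task is the (finite, routine) verification on the tables; if you want to explain it, your sketch is a reasonable research program but should be labelled as such.
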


\begin{ex}
  Consider the divisor $\sD= 2(3) + (15) - (21)$, the prime $p=13$, discriminant $D=673$, and $n=3$. The field $E$ has defining polynomial $x^4 - 69x^2 - 324$, and the field $\cM$ has defining polynomial
  \begin{align*}
    x^8 - 4x^7 + 8x^6 + 62x^5 + 3505x^4 - 7142x^3 + 2450x^2 - 84560x + 1459264.
  \end{align*}
  The computed special value $J_\sD[\tau]^\mathrm{conj}$ is a root modulo $13^{150}$ of the polynomial $x^4 + \frac{a}{d}x^3+\frac{b}{d}x^2+\frac{a}{d}x+1$, where 
  \begin{small}
  \begin{align*}
    a =  -&211349500654446599836316470733755251435519541615714511636\\& 821171012346603970976458836245074566482561365850918592452,
  \end{align*}
  \begin{align*}
    b= &286782751017446189391219456317914883658028399549685346229\\& 776145380151720639238579273506832330717517876949503918630,
  \end{align*}
  \begin{align*}
    d = &828851276413836168532193777337568257582850330951414053466\\& 75932530059510450523156131010624045718836312406000008881.
  \end{align*}
\end{small}
If we denote by $j$ a root of this polynomial, it turns out that the field $\Q(j)$ is a quartic field of discriminant $2^{2} \cdot 3 \cdot 673^{2}$ and minimal polynomial $x^4 + 49x^2 + 432$. The extension $E(j)$ that $j$ generates over $E$ is such that $[E(j)\colon E]=2$ and its relative discriminant has norm 16. Therefore, $E(j)/E$ is unramified outside $2$.
Moreover, if we take compositum with $K=\Q(i)$ we obtain an unramified extension, that is, $E(j)\cdot K/E\cdot K$ is unramified. Finally, the norm $\mathrm{Norm}_{\Q(j)/\Q}(j)$ is a rational number supported at the primes
\begin{align*}
   659, 1297, 1579, 6007, 9631, 12791, 13099,
\end{align*}
all of them satisfying condition \eqref{eq: cond l}. In fact, in this example there are 101 possible primes $\ell$ satisfying  \eqref{eq: cond l}. Of these, there are 19 such that  all primes $\mathfrak{l}$ of $L$ dividing $\ell$ are fixed by the non-trivial automorphism of $L_{\fp_i}/\Q_p$. Since $\mathrm{Norm}_{L_{\fp_i}/\QQ_p} (J_\sD[\tau]^{\mathrm{conj}})=1$, these primes will never appear and therefore there are only 81 possibilities for the primes $\ell$ dividing $\mathrm{Norm}_{\QQ(j)/\QQ} (j)$. The largest prime allowed by condition  \eqref{eq: cond l} is 25447, and in fact there are only 13 primes larger that 10000 that satisfy the condition; the only ones appearing are 12791 and 13099.
\end{ex}

\subsection*{Tables of small CM and small RM points} In addition to computing the values at big ATR points, we have also evaluated all cocycles at several small CM and small RM points, and we have recognized many of these values as probably algebraic. Although the first such experiments were already carried out in \cite{DGL}, our computations provide further numerical evidence for the rationality conjecture in these cases, as we have found 56 new probably algebraic non-trivial small CM points and 591 small RM points. The complete data for these computations can be found in table \cite{Masdeu2}, by selecting \texttt{small CM} or \texttt{small RM} in the \texttt{type} column.

For the case of small RM, as we explained in the introduction, they are related to the special values of the rigid meromorphic cocycles of Darmon--Vonk \cite{darmon-vonk}. However, the precise relation between these two types of rigid meromorphic cocycles and the respective special points has not yet been formulated, and we believe that our tables may serve as useful experimental data towards determining this exact relation.

As for the case of small CM points, perhaps the most remarkable novelty is that we have found some special values defined over number fields strictly larger than $\Q(i)$, whereas in \cite{DGL} all such values were defined over $\Q(i)$.
\begin{ex}
For $\sD = 2(3)+15-21$, $p=13$, and $D = 41$, the corresponding special value generates the quartic field $M$ with defining polynomial $f=x^{4} - 2 x^{3} - x^{2} + 2 x + 2$. This field has discriminant $2^{4} \cdot 41$ and it contains $\Q(i)$. Moreover, if we denote $L=\Q(i,\sqrt{-41})$, the extension $M L/M$ is quadratic and unramified.
\end{ex}

Out of the 56 probably algebraic values corresponding to small CM points that we have found, in 9 cases the situation is similar to the one described in the previous example:
the value of the small CM point generates a quartic field $M$ that is unramified outside the primes dividing $2 D$
and contains $\Q(i)$. Moreover, for $L=\Q(i,\sqrt{-D})$ the extension $ML/M$ is quadratic and unramified.
There are another 9 cases in which the special CM value generates a quadratic field $M$ different from $\Q(i)$,
and in these cases as well the extension $ML/M$ is unramified. Finally, in 29 cases the special value generates $\QQ(i)$, and in 9 cases it belongs to $\Q$.

\begin{obs}
As a final piece of evidence, we checked that all big ATR and small CM points we identified are defined over fields that match the prediction of the algebraicity conjecture in Section \ref{sec:alg conj}.
\end{obs}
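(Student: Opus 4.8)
The content of this observation is the following verifiable assertion: for each of the recognised algebraic special values $J^{\mathrm{alg}}$ (the $190$ big ATR values, together with the small CM values discussed above), the number field $M\coloneq E_\tau(J^{\mathrm{alg}})$ generated over the reflex field $E_\tau$ of the underlying special point has exactly the shape dictated by the conjectural reciprocity law recalled in Section~\ref{sec:alg conj}. Writing $F_\tau\subseteq E_\tau$ for the unique index-$2$ subfield, this means: (i) $M/E_\tau$ is abelian and unramified outside $2$; (ii) $M\cdot\Q(i)/E_\tau\cdot\Q(i)$ is unramified at all finite primes, reflecting that $\Q(i)$ is the field of definition of the cocycle by Proposition~\ref{fieldofdefprop}; and (iii) $M/F_\tau$ is Galois, with the non-trivial element of $\Gal(E_\tau/F_\tau)$ acting on $\Gal(M/E_\tau)$ by inversion, so that the only subextensions of the Galois closure of $M/F_\tau$ abelian over $F_\tau$ are polyquadratic. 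The plan is to verify (i)--(iii) one special point at a time, by exact computations in algebraic number theory starting from the recognised minimal polynomial $P(x)\in\Q[x]$ of $J^{\mathrm{alg}}$.

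The individual checks run as follows. First, one forms $\Q(J^{\mathrm{alg}})=\Q[x]/(P(x))$ and builds $M=E_\tau(J^{\mathrm{alg}})$ as a compositum inside a fixed algebraic closure, recording $[M:E_\tau]$; in the big ATR case $E_\tau=E$ is the quartic ATR field of Section~\ref{sec: evaluation} and $F_\tau=F=\Q(\sqrt D)$, while for a small CM point $E_\tau$ is the relevant imaginary quadratic field and $F_\tau=\Q$. Second, one computes the relative discriminant $\mathfrak{d}_{M/E_\tau}$ and checks that its support lies above $2$; similarly one checks that $\mathfrak{d}_{M\cdot\Q(i)/E_\tau\cdot\Q(i)}$ is trivial. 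Third, one verifies that $M/E_\tau$ is abelian, either by computing the Galois group of its Galois closure or, more in the spirit of the conjecture, by exhibiting $M$ inside a ray class field of $E_\tau$ of conductor supported at the primes above $2$. Fourth, one checks that $M/F_\tau$ is Galois and that $\Gal(M/F_\tau)\cong\Gal(M/E_\tau)\rtimes\Gal(E_\tau/F_\tau)$ with the displayed inversion action; in the big ATR case the bookkeeping is organised by the $\mathrm{D_{2\cdot 4}}$-subfield diagram already determined in Section~\ref{sec: evaluation}. Finally, one confirms that $\Q(i)\subseteq M$ precisely in the cases where the reciprocity law of \cite[Section 4.4]{DGL}, combined with Proposition~\ref{fieldofdefprop}, predicts its appearance. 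All of these are routine operations in a computer algebra system, and the ``proof'' consists in recording their outcomes across the tables of Appendix~\ref{appendix} and of \cite{Masdeu2}.

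\textbf{The main obstacle} is not any single number-theoretic computation but the logical status of the input: each $J_\sD[\tau]^{\bullet}$ is known only to finite $p$-adic precision, so the passage to an honest algebraic number $J^{\mathrm{alg}}$ is a heuristic recognition rather than a theorem, and every field-theoretic verification above is therefore conditional on that recognition being correct --- which would itself follow from the algebraicity conjecture of \cite{DGL}. Thus the statement must be read as a consistency check: it shows that the numerically identified values sit in fields of exactly the predicted shape, but it does not circumvent either the algebraicity conjecture or its reciprocity refinement. A secondary, genuinely technical point is making the fourth step uniform across the families: because $E$ is not Galois over $\Q$ in the big ATR case, the inversion condition has to be phrased relative to $F_\tau$ and one must keep careful track of which quadratic subfield of $E_\tau$ plays the role of $F_\tau$ and of how it embeds in the Galois closure; for the families considered here this is precisely the subfield lattice worked out in Section~\ref{sec: evaluation}, so no new difficulty arises.
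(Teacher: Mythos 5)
Your proposal is correct and matches what the paper actually does: the Observation is an empirical consistency check carried out case by case on the recognised values $J^{\mathrm{alg}}$, verifying exactly the properties you list (relative discriminant of $E_\tau(J^{\mathrm{alg}})/E_\tau$ supported at $2$, unramifiedness after taking the compositum with $\Q(i)$, and the Galois/inversion structure over $F_\tau$), as illustrated in the worked examples of Section~\ref{sec: experiments}. You also correctly identify the same caveat the paper makes with its ``probable algebraic integer'' language, namely that the whole verification is conditional on the heuristic recognition of the $p$-adic values as algebraic numbers.
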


\begin{landscape}
\appendix

\section{Tables}\label{appendix}

  {\tiny
\begin{longtable}{p{5em}p{3em}p{.5em}p{7cm}p{6cm}}
\caption{Recognized points of type bigATR for p = 5} \label{tbl:table-p5} \\
\toprule
label & D & n & field & factor \\
\midrule
\endfirsthead
\caption[]{Recognized points of type bigATR for p = 5} \\
\toprule
label & D & n & field & factor \\
\midrule
\endhead
\midrule
\multicolumn{5}{r}{Continued on next page} \\
\midrule
\endfoot
\bottomrule
\endlastfoot
6\_12\_even & $29$ & $3$ & $x^{8} - 2 x^{7} + 2 x^{6} - 30 x^{5} + 254 x^{4} - 870 x^{3} + 1682 x^{2} - 1682 x + 841$ & $J = 7^{12} \cdot 23^{-8} \cdot 67^{-4} \cdot 283^{-4} \cdot 631^{4} \cdot 1543^{-4} $ (l) \\
6·6\_14\_even & $29$ & $3$ & $x^{4} - 15 x^{2} - 9$ & $J = 7^{4} \cdot 107^{-8} \cdot 347^{8} \cdot 631^{8} \cdot 863^{-4} \cdot 1031^{4} \cdot 1327^{-4} \cdot 1543^{-8} $ (l) \\
6\_12\_odd & $29$ & $3$ & $x^{8} - 2 x^{7} + 2 x^{6} - 30 x^{5} + 254 x^{4} - 870 x^{3} + 1682 x^{2} - 1682 x + 841$ & $J = 7^{8} \cdot 71^{8} \cdot 1399^{-4} \cdot 2203^{-4} $ (l) \\
6\_24\_even & $41$ & $1$ & $x^{8} + 13 x^{6} + 40 x^{4} + 52 x^{2} + 16$ & $J = 2^{-48} \cdot 103^{-4} \cdot 127^{4} $ (l) \\
6\_24\_even & $61$ & $1$ & $x^{8} - 4 x^{7} + 8 x^{6} + 2 x^{5} + 27 x^{4} - 66 x^{3} + 50 x^{2} - 90 x + 81$ & $J = 3^{16} \cdot 19^{16} \cdot 47^{-8} \cdot 131^{-8} \cdot 199^{-8} \cdot 271^{-4} \cdot 463^{-8} \cdot 2467^{-4} \cdot 4651^{4} $ (l) \\
6\_12\_even & $61$ & $1$ & $x^{8} - 4 x^{7} + 8 x^{6} + 2 x^{5} + 27 x^{4} - 66 x^{3} + 50 x^{2} - 90 x + 81$ & $J = 3^{-8} \cdot 19^{8} \cdot 127^{-8} \cdot 271^{-4} \cdot 379^{-4} \cdot 571^{4} \cdot 1483^{4} $ (l) \\
6\_24\_odd & $61$ & $1$ & $x^{8} - 4 x^{7} + 8 x^{6} + 2 x^{5} + 27 x^{4} - 66 x^{3} + 50 x^{2} - 90 x + 81$ & $J = 3^{16} \cdot 19^{-4} \cdot 103^{4} \cdot 367^{-4} \cdot 439^{-4} \cdot 487^{-8} \cdot 619^{-4} \cdot 3163^{-4} \cdot 4099^{4} \cdot 4987^{4} \cdot 5779^{4} \cdot 6691^{4} $ (l) \\
6\_12\_odd & $61$ & $1$ & $x^{8} - 4 x^{7} + 8 x^{6} + 2 x^{5} + 27 x^{4} - 66 x^{3} + 50 x^{2} - 90 x + 81$ & $J = 3^{8} \cdot 103^{4} \cdot 367^{-4} \cdot 439^{-4} \cdot 691^{-4} \cdot 859^{4} \cdot 1723^{-4} $ (l) \\
6\_12\_even & $89$ & $3$ & $x^{8} + 15 x^{6} + 191 x^{4} + 1845 x^{2} + 1156$ & $J = 2^{24} \cdot 47^{-4} \cdot 67^{4} $ (l) \\
6\_12\_odd & $89$ & $2$ & $x^{8} + 57 x^{4} + 256$ & $J = 47^{4} \cdot 523^{4} \cdot 1867^{-4} \cdot 2011^{-2} \cdot 5323^{-2} \cdot 9067^{-2} $ (l) \\
6\_24\_even & $109$ & $1$ & $x^{8} - 4 x^{7} + 8 x^{6} + 10 x^{5} + 63 x^{4} - 154 x^{3} + 162 x^{2} - 126 x + 49$ & $J = 3^{-16} \cdot 7^{8} \cdot 43^{8} \cdot 83^{8} \cdot 263^{8} \cdot 431^{8} \cdot 727^{-4} \cdot 7243^{-4} \cdot 8467^{-4} \cdot 8803^{4} $ (l) \\
6\_12\_even & $109$ & $1$ & $x^{8} - 4 x^{7} + 8 x^{6} + 10 x^{5} + 63 x^{4} - 154 x^{3} + 162 x^{2} - 126 x + 49$ & $J = 3^{-8} \cdot 31^{-8} \cdot 43^{-8} \cdot 211^{-4} \cdot 307^{-4} \cdot 727^{-4} $ (l) \\
6\_24\_odd & $109$ & $1$ & $x^{8} - 4 x^{7} + 8 x^{6} + 10 x^{5} + 63 x^{4} - 154 x^{3} + 162 x^{2} - 126 x + 49$ & $J = 3^{-16} \cdot 31^{-4} \cdot 223^{-4} \cdot 311^{8} \cdot 751^{4} \cdot 1699^{-4} \cdot 2251^{-4} \cdot 5107^{4} \cdot 5563^{4} \cdot 6571^{-4} \cdot 10267^{-4} \cdot 11491^{4} $ (l) \\
6\_12\_odd & $109$ & $1$ & $x^{8} - 4 x^{7} + 8 x^{6} + 10 x^{5} + 63 x^{4} - 154 x^{3} + 162 x^{2} - 126 x + 49$ & $J = 3^{8} \cdot 7^{8} \cdot 31^{-4} \cdot 223^{-4} \cdot 751^{4} \cdot 1483^{4} \cdot 2131^{4} \cdot 2659^{-4} $ (l) \\
6·6\_14\_odd & $149$ & $1$ & $x^{4} - 7 x^{2} - 25$ & $J = 7^{-16} \cdot 31^{8} \cdot 103^{-8} \cdot 107^{8} \cdot 631^{8} \cdot 911^{-4} \cdot 1039^{8} \cdot 1063^{-8} \cdot 2087^{-4} \cdot 3719^{-4} \cdot 4463^{-4} $ (l) \\
6\_12\_even & $149$ & $1$ & $x^{8} - 4 x^{7} + 8 x^{6} + 10 x^{5} + 83 x^{4} - 194 x^{3} + 162 x^{2} - 306 x + 289$ & $J = 7^{-16} \cdot 19^{-8} \cdot 47^{8} \cdot 179^{8} \cdot 967^{4} \cdot 1459^{4} \cdot 2251^{4} \cdot 3547^{4} $ (l) \\
6·6\_14\_even & $149$ & $1$ & $x^{4} - 7 x^{2} - 25$ & $J = 7^{-8} \cdot 19^{-24} \cdot 47^{16} \cdot 67^{-8} \cdot 191^{-4} \cdot 263^{-4} \cdot 967^{8} \cdot 4111^{-4} \cdot 4327^{-4} \cdot 4943^{4} \cdot 5279^{-4} \cdot 5399^{4} $ (l) \\
6\_12\_odd & $149$ & $1$ & $x^{8} - 4 x^{7} + 8 x^{6} + 10 x^{5} + 83 x^{4} - 194 x^{3} + 162 x^{2} - 306 x + 289$ & $J = 7^{-8} \cdot 67^{4} \cdot 103^{-8} \cdot 251^{-8} \cdot 631^{4} \cdot 859^{-4} \cdot 1039^{4} \cdot 1063^{-4} \cdot 2707^{4} $ (l) \\
6\_12\_odd & $269$ & $1$ & $x^{8} - 4 x^{7} + 8 x^{6} + 10 x^{5} + 143 x^{4} - 314 x^{3} + 162 x^{2} - 846 x + 2209$ & $J = 43^{-4} \cdot 47^{-8} \cdot 131^{-8} \cdot 1039^{4} \cdot 3187^{-4} \cdot 3307^{-4} \cdot 6091^{4} $ (l) \\
6·6\_14\_odd & $389$ & $1$ & $x^{4} - 17 x^{2} - 25$ & $J = 7^{28} \cdot 11^{8} \cdot 19^{8} \cdot 59^{8} \cdot 919^{-8} \cdot 991^{-4} \cdot 1423^{8} \cdot 2447^{4} \cdot 4391^{4} \cdot 4943^{-4} \cdot 6679^{4} \cdot 10567^{-4} \cdot 12959^{-4} $ (l) \\
6\_12\_even & $389$ & $1$ & $x^{8} - 4 x^{7} + 8 x^{6} + 10 x^{5} + 203 x^{4} - 434 x^{3} + 162 x^{2} - 1386 x + 5929$ & $J = 7^{-16} \cdot 19^{8} \cdot 2239^{4} \cdot 2467^{-4} \cdot 2767^{-4} \cdot 2791^{-4} \cdot 3907^{-4} \cdot 6763^{4} \cdot 9547^{4} \cdot 9619^{4} $ (l) \\
6·6\_14\_even & $389$ & $1$ & $x^{4} - 17 x^{2} - 25$ & $J = 7^{24} \cdot 11^{-8} \cdot 19^{-8} \cdot 79^{8} \cdot 599^{-4} \cdot 2039^{-4} \cdot 2239^{-8} \cdot 2767^{-8} \cdot 2791^{-8} \cdot 5303^{4} \cdot 7487^{-4} \cdot 11159^{-4} \cdot 12511^{4} \cdot 13127^{-4} \cdot 13367^{4} $ (l) \\
6\_12\_odd & $389$ & $1$ & $x^{8} - 4 x^{7} + 8 x^{6} + 10 x^{5} + 203 x^{4} - 434 x^{3} + 162 x^{2} - 1386 x + 5929$ & $J = 7^{-12} \cdot 67^{4} \cdot 179^{8} \cdot 331^{4} \cdot 919^{4} \cdot 1423^{4} \cdot 4651^{4} \cdot 10459^{4} $ (l) \\
6\_12\_odd & $401$ & $3$ & $x^{8} - 4 x^{7} + 8 x^{6} + 110 x^{5} + 3313 x^{4} - 6854 x^{3} + 6962 x^{2} - 3776 x + 1024$ & $J^5 = 7^{8} \cdot 383^{4} $ (l) \\
6\_24\_even & $409$ & $1$ & $x^{8} - 4 x^{7} + 8 x^{6} + 30 x^{5} + 313 x^{4} - 694 x^{3} + 722 x^{2} - 456 x + 144$ & $J = 2^{32} \cdot 3^{-8} \cdot 71^{8} \cdot 1879^{4} \cdot 2383^{4} $ (l) \\
6\_24\_odd & $409$ & $1$ & $x^{8} - 4 x^{7} + 8 x^{6} + 30 x^{5} + 313 x^{4} - 694 x^{3} + 722 x^{2} - 456 x + 144$ & $J = 2^{-16} \cdot 3^{-8} \cdot 83^{8} \cdot 103^{-4} \cdot 167^{8} \cdot 823^{4} \cdot 1063^{-4} \cdot 1567^{-4} \cdot 1759^{4} \cdot 2671^{-4} $ (l) \\
6·6\_14\_odd & $421$ & $1$ & $x^{4} - 15 x^{2} - 49$ & $J = 3^{24} \cdot 7^{-4} \cdot 31^{16} \cdot 79^{4} \cdot 167^{8} \cdot 1447^{8} \cdot 1607^{4} \cdot 2551^{8} \cdot 2879^{4} \cdot 6911^{4} \cdot 8423^{-4} \cdot 11447^{-4} $ (l) \\
6\_12\_even & $421$ & $1$ & $x^{8} - 4 x^{7} + 8 x^{6} + 18 x^{5} + 247 x^{4} - 538 x^{3} + 338 x^{2} - 1638 x + 3969$ & $J = 3^{-8} \cdot 7^{8} \cdot 31^{8} \cdot 103^{4} \cdot 107^{-8} \cdot 199^{-4} \cdot 991^{-4} \cdot 1783^{-4} \cdot 2287^{-4} \cdot 6547^{-4} $ (l) \\
6·6\_14\_even & $421$ & $1$ & $x^{4} - 15 x^{2} - 49$ & $J = 3^{-32} \cdot 11^{-24} \cdot 103^{8} \cdot 139^{-8} \cdot 199^{-8} \cdot 863^{4} \cdot 991^{-8} \cdot 1327^{-4} \cdot 1783^{-8} \cdot 1831^{4} \cdot 2287^{-8} $ (l) \\
6\_12\_odd & $421$ & $1$ & $x^{8} - 4 x^{7} + 8 x^{6} + 18 x^{5} + 247 x^{4} - 538 x^{3} + 338 x^{2} - 1638 x + 3969$ & $J = 3^{8} \cdot 7^{12} \cdot 11^{-16} \cdot 31^{8} \cdot 139^{4} \cdot 1447^{4} \cdot 2179^{4} \cdot 2551^{4} \cdot 2707^{-4} \cdot 4363^{-4} \cdot 10243^{-4} $ (l) \\
6\_24\_even & $569$ & $1$ & $x^{8} - 4 x^{7} + 8 x^{6} + 30 x^{5} + 393 x^{4} - 854 x^{3} + 722 x^{2} - 1976 x + 2704$ & $J = 2^{-80} \cdot 67^{-8} \cdot 107^{-8} \cdot 139^{8} \cdot 239^{8} $ (l) \\
6\_24\_odd & $569$ & $1$ & $x^{8} - 4 x^{7} + 8 x^{6} + 30 x^{5} + 393 x^{4} - 854 x^{3} + 722 x^{2} - 1976 x + 2704$ & $J = 2^{16} \cdot 7^{28} \cdot 79^{4} \cdot 307^{-8} \cdot 2719^{-4} \cdot 3727^{4} $ (l) \\
6\_12\_even & $661$ & $1$ & $x^{8} - 4 x^{7} + 8 x^{6} + 2 x^{5} + 327 x^{4} - 666 x^{3} + 50 x^{2} - 1590 x + 25281$ & $J = 3^{24} \cdot 11^{-16} \cdot 31^{-8} \cdot 43^{-8} \cdot 239^{-8} \cdot 311^{-8} \cdot 359^{8} \cdot 739^{4} \cdot 1831^{-4} \cdot 1867^{4} \cdot 2719^{-4} \cdot 3511^{4} \cdot 4423^{-4} \cdot 4759^{-4} \cdot 8179^{4} \cdot 8731^{4} \cdot 11587^{-4} \cdot 12043^{4} \cdot 16747^{-4} $ (l) \\
6·6\_14\_even & $661$ & $1$ & $x^{4} - 25 x^{2} - 9$ & $J = 3^{80} \cdot 11^{-8} \cdot 43^{-16} \cdot 59^{-8} \cdot 239^{-16} \cdot 359^{-4} \cdot 1831^{-8} \cdot 2719^{-8} \cdot 2767^{-4} \cdot 3511^{8} \cdot 4423^{-8} \cdot 4759^{-8} \cdot 20063^{-4} $ (l) \\
6\_12\_odd & $661$ & $1$ & $x^{8} - 4 x^{7} + 8 x^{6} + 2 x^{5} + 327 x^{4} - 666 x^{3} + 50 x^{2} - 1590 x + 25281$ & $J = 3^{-24} \cdot 31^{4} \cdot 43^{-4} \cdot 127^{4} \cdot 199^{4} \cdot 1327^{-4} \cdot 2383^{-4} \cdot 3019^{-4} \cdot 3691^{-4} \cdot 4567^{4} \cdot 4591^{-4} \cdot 5827^{-4} \cdot 9091^{-4} \cdot 9619^{4} \cdot 13723^{4} \cdot 14107^{4} \cdot 14947^{4} \cdot 17659^{4} \cdot 17851^{-4} $ (l) \\
6\_12\_even & $701$ & $1$ & $x^{8} - 4 x^{7} + 8 x^{6} + 42 x^{5} + 567 x^{4} - 1226 x^{3} + 1250 x^{2} - 950 x + 361$ & $J = 7^{12} \cdot 19^{-8} \cdot 83^{8} \cdot 103^{4} \cdot 163^{-4} \cdot 199^{-4} \cdot 3019^{-4} \cdot 9883^{4} \cdot 12739^{-4} \cdot 16651^{4} $ (l) \\
6\_12\_odd & $701$ & $1$ & $x^{8} - 4 x^{7} + 8 x^{6} + 42 x^{5} + 567 x^{4} - 1226 x^{3} + 1250 x^{2} - 950 x + 361$ & $J = 7^{-8} \cdot 43^{-4} \cdot 487^{4} \cdot 607^{4} \cdot 2671^{-4} \cdot 3391^{-4} \cdot 3463^{4} \cdot 4567^{-4} \cdot 5023^{4} \cdot 10243^{-4} \cdot 10771^{4} \cdot 15259^{-4} $ (l) \\
6·6\_14\_odd & $821$ & $1$ & $x^{4} - 25 x^{2} - 49$ & $J = 7^{-8} \cdot 23^{16} \cdot 103^{-8} \cdot 223^{-8} \cdot 263^{-8} \cdot 271^{-8} \cdot 727^{8} \cdot 1607^{-4} \cdot 3079^{-8} \cdot 4663^{-8} \cdot 4951^{-4} \cdot 5167^{8} \cdot 5839^{8} \cdot 7823^{-4} \cdot 10271^{-4} \cdot 19583^{4} \cdot 23279^{-4} \cdot 23447^{4} \cdot 27743^{4} \cdot 28751^{-4} \cdot 31231^{4} \cdot 31663^{4} \cdot 31991^{-4} $ (l) \\
6\_12\_even & $821$ & $1$ & $x^{8} - 4 x^{7} + 8 x^{6} + 18 x^{5} + 447 x^{4} - 938 x^{3} + 338 x^{2} - 4238 x + 26569$ & $J = 7^{4} \cdot 19^{-8} \cdot 23^{-8} \cdot 43^{8} \cdot 103^{-8} \cdot 199^{-4} \cdot 211^{8} \cdot 1471^{4} \cdot 3547^{-4} \cdot 4327^{-4} \cdot 4903^{-4} \cdot 5431^{-4} \cdot 5743^{4} \cdot 9811^{-4} \cdot 19891^{-4} \cdot 21523^{-4} \cdot 23371^{-4} $ (l) \\
6·6\_14\_even & $821$ & $1$ & $x^{4} - 25 x^{2} - 49$ & $J = 7^{-20} \cdot 23^{8} \cdot 43^{8} \cdot 67^{-8} \cdot 79^{-8} \cdot 163^{8} \cdot 199^{8} \cdot 211^{8} \cdot 463^{4} \cdot 1031^{4} \cdot 1471^{-8} \cdot 4327^{8} \cdot 4903^{-8} \cdot 5431^{8} \cdot 5743^{8} \cdot 9199^{4} \cdot 11831^{-4} \cdot 14303^{4} \cdot 20759^{-4} \cdot 24103^{-4} \cdot 27127^{-4} \cdot 28319^{4} \cdot 30223^{4} \cdot 30871^{4} \cdot 31847^{-4} \cdot 32183^{-4} $ (l) \\
6\_12\_odd & $821$ & $1$ & $x^{8} - 4 x^{7} + 8 x^{6} + 18 x^{5} + 447 x^{4} - 938 x^{3} + 338 x^{2} - 4238 x + 26569$ & $J = 7^{8} \cdot 23^{-8} \cdot 43^{-4} \cdot 67^{8} \cdot 71^{-8} \cdot 79^{-4} \cdot 223^{-4} \cdot 727^{4} \cdot 3079^{4} \cdot 4051^{-4} \cdot 4663^{-4} \cdot 5167^{4} \cdot 5839^{4} \cdot 11443^{-4} \cdot 13339^{4} \cdot 16651^{4} \cdot 17659^{-4} $ (l) \\
6\_12\_odd & $881$ & $1$ & $x^{8} - 4 x^{7} + 8 x^{6} + 22 x^{5} + 497 x^{4} - 1046 x^{3} + 450 x^{2} - 4920 x + 26896$ & $J = 2^{-144} \cdot 43^{8} $ (l) \\
6·6\_14\_odd & $1069$ & $1$ & $x^{4} - 13 x^{2} - 225$ & $J = 3^{24} \cdot 19^{-8} \cdot 59^{16} \cdot 79^{-16} \cdot 211^{-8} \cdot 227^{-8} \cdot 2423^{4} \cdot 3119^{-4} \cdot 4159^{8} \cdot 4447^{-4} \cdot 5407^{-8} \cdot 7127^{-4} \cdot 7687^{-8} \cdot 15551^{4} \cdot 30671^{4} $ (l) \\
6·6\_14\_even & $1069$ & $1$ & $x^{4} - 13 x^{2} - 225$ & $J = 3^{48} \cdot 67^{8} \cdot 71^{-4} \cdot 223^{8} \cdot 239^{4} \cdot 431^{4} \cdot 863^{-4} \cdot 1607^{-4} \cdot 1663^{4} \cdot 2383^{-8} \cdot 2767^{-8} \cdot 3511^{-4} \cdot 3607^{-8} \cdot 3943^{-8} \cdot 7591^{-8} \cdot 7639^{8} \cdot 19727^{4} \cdot 27647^{4} \cdot 32831^{4} \cdot 37223^{4} $ (l) \\
6\_12\_odd & $1129$ & $1$ & $x^{8} - 4 x^{7} + 8 x^{6} + 30 x^{5} + 673 x^{4} - 1414 x^{3} + 722 x^{2} - 7296 x + 36864$ & $J^9 = 47^{-8} \cdot 127^{8} \cdot 251^{-8} $ (l) \\
6\_12\_odd & $1241$ & $1$ & $x^{8} - 4 x^{7} + 8 x^{6} - 2 x^{5} + 617 x^{4} - 1238 x^{3} + 18 x^{2} - 1848 x + 94864$ & $J = 2^{32} \cdot 7^{4} \cdot 107^{-4} \cdot 131^{-4} $ (l) \\
6·6\_14\_odd & $1721$ & $3$ & $x^{4} - 33 x^{2} - 3600$ & $J = 2^{-32} \cdot 11^{48} \cdot 19^{-16} \cdot 79^{8} \cdot 83^{-16} \cdot 283^{-16} \cdot 379^{-8} \cdot 751^{-8} \cdot 947^{8} \cdot 1031^{8} \cdot 2347^{-8} \cdot 3359^{-16} \cdot 3803^{8} \cdot 6967^{-8} $ (l) \\
6·6\_14\_odd & $1889$ & $1$ & $x^{4} - 17 x^{2} - 400$ & $J = 2^{336} \cdot 71^{-8} \cdot 79^{4} \cdot 83^{8} \cdot 191^{4} \cdot 227^{8} \cdot 239^{-4} \cdot 547^{4} \cdot 1831^{-4} $ (l) \\
6·6\_14\_odd & $1901$ & $1$ & $x^{4} - 35 x^{2} - 169$ & $J^3 = 7^{-4} \cdot 11^{-32} \cdot 19^{24} \cdot 31^{8} \cdot 43^{8} \cdot 79^{24} \cdot 83^{-8} \cdot 251^{16} \cdot 271^{8} \cdot 491^{8} \cdot 907^{-8} \cdot 1427^{8} \cdot 2207^{-4} \cdot 3919^{-8} \cdot 4663^{4} \cdot 5171^{-8} \cdot 5659^{-8} \cdot 9311^{4} \cdot 9767^{-4} \cdot 13687^{-8} \cdot 13799^{4} \cdot 18127^{4} \cdot 43943^{4} \cdot 47287^{4} \cdot 50159^{-4} \cdot 62119^{-4} \cdot 69959^{4} \cdot 70079^{4} \cdot 71191^{-4} $ (l) \\
6·6\_14\_even & $1901$ & $1$ & $x^{4} - 35 x^{2} - 169$ & $J^3 = 7^{-16} \cdot 11^{8} \cdot 19^{-24} \cdot 31^{16} \cdot 43^{8} \cdot 79^{-12} \cdot 83^{8} \cdot 167^{8} \cdot 227^{-8} \cdot 463^{4} \cdot 599^{-4} \cdot 683^{8} \cdot 1051^{16} \cdot 1087^{-8} \cdot 1439^{-4} \cdot 1579^{8} \cdot 3499^{-8} \cdot 4783^{8} \cdot 4951^{8} \cdot 5507^{-8} \cdot 9127^{-8} \cdot 11311^{8} \cdot 20231^{4} \cdot 28631^{4} \cdot 42359^{4} \cdot 51263^{-4} \cdot 54167^{-4} \cdot 58727^{-4} \cdot 62903^{-4} \cdot 64927^{4} \cdot 67967^{4} \cdot 68111^{-4} \cdot 69463^{-4} \cdot 72559^{4} $ (l) \\
6\_12\_even & $2521$ & $1$ & $x^{8} - 4 x^{7} + 8 x^{6} + 62 x^{5} + 1737 x^{4} - 3606 x^{3} + 2450 x^{2} - 22680 x + 104976$ & $J = 2^{-176} \cdot 283^{8} \cdot 419^{-8} $ (l) \\
6\_12\_odd & $2521$ & $1$ & $x^{8} - 4 x^{7} + 8 x^{6} + 62 x^{5} + 1737 x^{4} - 3606 x^{3} + 2450 x^{2} - 22680 x + 104976$ & $J = 2^{96} \cdot 3^{-16} \cdot 7^{-8} \cdot 151^{-8} $ (l) \\
\end{longtable}

}
{\tiny
\begin{longtable}{p{5em}p{3em}p{.5em}p{7cm}p{6cm}}
\caption{Recognized points of type bigATR for p = 13} \label{tbl:table-p13} \\
\toprule
label & D & n & field & factor \\
\midrule
\endfirsthead
\caption[]{Recognized points of type bigATR for p = 13} \\
\toprule
label & D & n & field & factor \\
\midrule
\endhead
\midrule
\multicolumn{5}{r}{Continued on next page} \\
\midrule
\endfoot
\bottomrule
\endlastfoot
6\_24\_odd & $29$ & $3$ & $x^{8} - 2 x^{7} + 2 x^{6} - 30 x^{5} + 254 x^{4} - 870 x^{3} + 1682 x^{2} - 1682 x + 841$ & $J = 7^{-16} \cdot 719^{4} \cdot 3539^{-4} $ (l) \\
6\_24\_odd & $29$ & $1$ & $x^{8} - 4 x^{7} + 8 x^{6} - 6 x^{5} + 15 x^{4} - 26 x^{3} + 2 x^{2} - 14 x + 49$ & $J = 7^{16} \cdot 83^{4} $ (l) \\
6\_24\_even & $29$ & $1$ & $x^{8} - 4 x^{7} + 8 x^{6} - 6 x^{5} + 15 x^{4} - 26 x^{3} + 2 x^{2} - 14 x + 49$ & $J = 71^{4} $ (l) \\
6\_24\_even & $29$ & $3$ & $x^{8} - 2 x^{7} + 2 x^{6} - 30 x^{5} + 254 x^{4} - 870 x^{3} + 1682 x^{2} - 1682 x + 841$ & $J = 7^{-8} \cdot 59^{4} \cdot 419^{4} $ (l) \\
6\_24\_odd & $181$ & $1$ & $x^{8} - 4 x^{7} + 8 x^{6} + 10 x^{5} + 99 x^{4} - 226 x^{3} + 162 x^{2} - 450 x + 625$ & $J = 3^{8} \cdot 59^{4} \cdot 479^{-4} \cdot 919^{-8} \cdot 1447^{-8} \cdot 1567^{-8} \cdot 2699^{-4} $ (l) \\
6\_24\_even & $181$ & $1$ & $x^{8} - 4 x^{7} + 8 x^{6} + 10 x^{5} + 99 x^{4} - 226 x^{3} + 162 x^{2} - 450 x + 625$ & $J = 3^{-8} \cdot 11^{-20} \cdot 43^{8} \cdot 359^{-4} \cdot 5387^{4} \cdot 6659^{-4} $ (l) \\
6\_24\_odd & $313$ & $1$ & $x^{8} - 4 x^{7} + 8 x^{6} + 14 x^{5} + 177 x^{4} - 390 x^{3} + 242 x^{2} - 1056 x + 2304$ & $J = 2^{56} \cdot 19^{-8} \cdot 599^{4} $ (l) \\
6\_24\_even & $313$ & $1$ & $x^{8} - 4 x^{7} + 8 x^{6} + 14 x^{5} + 177 x^{4} - 390 x^{3} + 242 x^{2} - 1056 x + 2304$ & $J = 2^{-32} \cdot 3^{8} \cdot 11^{-8} \cdot 311^{-4} \cdot 479^{4} $ (l) \\
6\_24\_odd & $337$ & $1$ & $x^{8} - 4 x^{7} + 8 x^{6} + 22 x^{5} + 225 x^{4} - 502 x^{3} + 450 x^{2} - 840 x + 784$ & $J = 2^{-72} \cdot 839^{-4} $ (l) \\
6\_24\_even & $337$ & $1$ & $x^{8} - 4 x^{7} + 8 x^{6} + 22 x^{5} + 225 x^{4} - 502 x^{3} + 450 x^{2} - 840 x + 784$ & $J = 2^{32} \cdot 3^{8} \cdot 7^{8} \cdot 239^{-4} $ (l) \\
6\_24\_odd & $373$ & $1$ & $x^{8} - 4 x^{7} + 8 x^{6} + 26 x^{5} + 267 x^{4} - 594 x^{3} + 578 x^{2} - 714 x + 441$ & $J = 3^{16} \cdot 7^{-16} \cdot 179^{-4} \cdot 283^{8} \cdot 863^{-4} \cdot 1031^{-4} \cdot 3659^{4} \cdot 5939^{4} \cdot 8699^{4} \cdot 9371^{4} $ (l) \\
6\_24\_even & $373$ & $1$ & $x^{8} - 4 x^{7} + 8 x^{6} + 26 x^{5} + 267 x^{4} - 594 x^{3} + 578 x^{2} - 714 x + 441$ & $J = 3^{-16} \cdot 7^{-16} \cdot 107^{-4} \cdot 383^{8} \cdot 503^{4} \cdot 1019^{4} \cdot 9923^{4} \cdot 15083^{-4} \cdot 15971^{-4} $ (l) \\
6\_24\_odd & $389$ & $1$ & $x^{8} - 4 x^{7} + 8 x^{6} + 10 x^{5} + 203 x^{4} - 434 x^{3} + 162 x^{2} - 1386 x + 5929$ & $J = 383^{4} \cdot 443^{-4} \cdot 2003^{4} \cdot 2411^{4} $ (l) \\
6\_24\_even & $389$ & $1$ & $x^{8} - 4 x^{7} + 8 x^{6} + 10 x^{5} + 203 x^{4} - 434 x^{3} + 162 x^{2} - 1386 x + 5929$ & $J = 7^{-8} \cdot 11^{-4} \cdot 59^{-4} \cdot 347^{-4} \cdot 983^{-4} \cdot 7211^{-4} $ (l) \\
6\_24\_odd & $521$ & $1$ & $x^{8} - 4 x^{7} + 8 x^{6} + 30 x^{5} + 369 x^{4} - 806 x^{3} + 722 x^{2} - 1520 x + 1600$ & $J = 2^{96} \cdot 211^{8} \cdot 283^{-8} $ (l) \\
6\_24\_even & $521$ & $1$ & $x^{8} - 4 x^{7} + 8 x^{6} + 30 x^{5} + 369 x^{4} - 806 x^{3} + 722 x^{2} - 1520 x + 1600$ & $J = 2^{8} \cdot 11^{8} \cdot 47^{12} \cdot 719^{4} $ (l) \\
6\_24\_odd & $701$ & $1$ & $x^{8} - 4 x^{7} + 8 x^{6} + 42 x^{5} + 567 x^{4} - 1226 x^{3} + 1250 x^{2} - 950 x + 361$ & $J = 7^{-8} \cdot 31^{8} \cdot 43^{-8} \cdot 131^{4} \cdot 29819^{-4} $ (l) \\
6\_24\_even & $701$ & $1$ & $x^{8} - 4 x^{7} + 8 x^{6} + 42 x^{5} + 567 x^{4} - 1226 x^{3} + 1250 x^{2} - 950 x + 361$ & $J = 83^{8} \cdot 103^{8} \cdot 1847^{4} \cdot 10499^{4} \cdot 17939^{4} $ (l) \\
6\_24\_odd & $809$ & $1$ & $x^{8} - 4 x^{7} + 8 x^{6} + 46 x^{5} + 665 x^{4} - 1430 x^{3} + 1458 x^{2} - 1080 x + 400$ & $J = 2^{-8} \cdot 7^{-8} \cdot 19^{-8} \cdot 43^{-8} $ (l) \\
6\_24\_even & $809$ & $1$ & $x^{8} - 4 x^{7} + 8 x^{6} + 46 x^{5} + 665 x^{4} - 1430 x^{3} + 1458 x^{2} - 1080 x + 400$ & $J = 2^{32} \cdot 7^{24} \cdot 379^{-8} $ (l) \\
6\_24\_odd & $985$ & $1$ & $x^{8} - 4 x^{7} + 8 x^{6} + 14 x^{5} + 513 x^{4} - 1062 x^{3} + 242 x^{2} - 4752 x + 46656$ & $J^3 = 2^{-32} \cdot 3^{4} \cdot 19^{4} \cdot 67^{4} \cdot 1847^{2} $ (l) \\
6\_24\_even & $985$ & $1$ & $x^{8} - 4 x^{7} + 8 x^{6} + 14 x^{5} + 513 x^{4} - 1062 x^{3} + 242 x^{2} - 4752 x + 46656$ & $J^3 = 2^{-12} \cdot 3^{-4} \cdot 19^{4} \cdot 1367^{-2} \cdot 1487^{-2} \cdot 2663^{-2} \cdot 2687^{2} $ (l) \\
6\_24\_odd & $1153$ & $1$ & $x^{8} - 4 x^{7} + 8 x^{6} + 6 x^{5} + 577 x^{4} - 1174 x^{3} + 98 x^{2} - 3864 x + 76176$ & $J = 2^{8} \cdot 11^{8} \cdot 23^{4} \cdot 503^{-4} \cdot 2663^{-4} $ (l) \\
6\_24\_even & $1153$ & $1$ & $x^{8} - 4 x^{7} + 8 x^{6} + 6 x^{5} + 577 x^{4} - 1174 x^{3} + 98 x^{2} - 3864 x + 76176$ & $J = 2^{-32} \cdot 3^{8} \cdot 11^{8} \cdot 23^{-4} \cdot 1439^{4} \cdot 3023^{4} $ (l) \\
6\_24\_odd & $1249$ & $1$ & $x^{8} - 4 x^{7} + 8 x^{6} + 54 x^{5} + 985 x^{4} - 2086 x^{3} + 1922 x^{2} - 4464 x + 5184$ & $J = 3^{-8} \cdot 103^{8} \cdot 283^{-8} \cdot 547^{8} \cdot 691^{-8} $ (l) \\
6\_24\_even & $1249$ & $1$ & $x^{8} - 4 x^{7} + 8 x^{6} + 54 x^{5} + 985 x^{4} - 2086 x^{3} + 1922 x^{2} - 4464 x + 5184$ & $J = 2^{104} \cdot 3^{-8} \cdot 47^{8} \cdot 79^{-8} \cdot 1103^{-4} $ (l) \\
6\_24\_even & $1453$ & $1$ & $x^{8} - 4 x^{7} + 8 x^{6} + 66 x^{5} + 1267 x^{4} - 2674 x^{3} + 2738 x^{2} - 1554 x + 441$ & $J = 3^{-8} \cdot 7^{-24} \cdot 11^{8} \cdot 23^{-4} \cdot 107^{-8} \cdot 263^{-8} \cdot 1559^{-4} \cdot 2663^{-4} \cdot 3739^{8} \cdot 11003^{-4} \cdot 29123^{-4} \cdot 43499^{-4} \cdot 47459^{4} \cdot 51131^{4} \cdot 56843^{-4} \cdot 59219^{-4} $ (l) \\
6\_24\_odd & $2521$ & $1$ & $x^{8} - 4 x^{7} + 8 x^{6} + 62 x^{5} + 1737 x^{4} - 3606 x^{3} + 2450 x^{2} - 22680 x + 104976$ & $J = 2^{40} \cdot 3^{-16} \cdot 7^{16} \cdot 3359^{-4} $ (l) \\
6\_24\_even & $2521$ & $1$ & $x^{8} - 4 x^{7} + 8 x^{6} + 62 x^{5} + 1737 x^{4} - 3606 x^{3} + 2450 x^{2} - 22680 x + 104976$ & $J = 2^{-64} \cdot 3^{8} \cdot 7^{8} \cdot 71^{8} \cdot 167^{-4} $ (l) \\
\end{longtable}

}
{\tiny
\begin{longtable}{p{5em}p{3em}p{.5em}p{7cm}p{6cm}}
\caption{Recognized points of type bigATR for p = 17} \label{tbl:table-p17} \\
\toprule
label & D & n & field & factor \\
\midrule
\endfirsthead
\caption[]{Recognized points of type bigATR for p = 17} \\
\toprule
label & D & n & field & factor \\
\midrule
\endhead
\midrule
\multicolumn{5}{r}{Continued on next page} \\
\midrule
\endfoot
\bottomrule
\endlastfoot
6\_24\_even & $8$ & $1$ & $x^{8} + 6 x^{4} + 1$ & $J = 31^{4} $ (l) \\
6\_24\_odd & $8$ & $2$ & $x^{8} + 6 x^{4} + 1$ & $J = 31^{-4} $ (l) \\
6\_24\_even & $53$ & $3$ & $x^{8} - 4 x^{7} + 8 x^{6} + 2 x^{5} + 235 x^{4} - 482 x^{3} + 50 x^{2} - 1130 x + 12769$ & $J = 7^{-4} \cdot 11^{8} \cdot 199^{-4} \cdot 523^{4} \cdot 1279^{4} \cdot 10399^{4} $ (l) \\
6\_24\_even & $89$ & $2$ & $x^{8} + 57 x^{4} + 256$ & $J = 9343^{-2} $ (l) \\
6\_24\_odd & $89$ & $2$ & $x^{8} + 57 x^{4} + 256$ & $J = 79^{4} \cdot 131^{4} \cdot 227^{-4} \cdot 1879^{4} \cdot 3343^{-2} $ (l) \\
6\_24\_even & $101$ & $1$ & $x^{8} - 4 x^{7} + 8 x^{6} + 10 x^{5} + 59 x^{4} - 146 x^{3} + 162 x^{2} - 90 x + 25$ & $J = 23^{16} \cdot 211^{4} \cdot 587^{-8} $ (l) \\
6\_24\_odd & $101$ & $1$ & $x^{8} - 4 x^{7} + 8 x^{6} + 10 x^{5} + 59 x^{4} - 146 x^{3} + 162 x^{2} - 90 x + 25$ & $J = 23^{8} \cdot 47^{-16} \cdot 683^{-8} \cdot 1231^{4} $ (l) \\
6\_24\_even & $137$ & $1$ & $x^{8} - 4 x^{7} + 8 x^{6} - 2 x^{5} + 65 x^{4} - 134 x^{3} + 18 x^{2} - 192 x + 1024$ & $J = 2^{-24} $ (l) \\
6\_24\_even & $145$ & $3$ & $x^{8} + 35 x^{6} + 523 x^{4} + 5805 x^{2} + 2916$ & $J^4 = 2^{-4} \cdot 1723 $ (l) \\
6\_24\_odd & $145$ & $3$ & $x^{8} + 35 x^{6} + 523 x^{4} + 5805 x^{2} + 2916$ & $J^4 = 2^{12} \cdot 59^{2} $ (l) \\
6\_24\_even & $229$ & $1$ & $x^{8} - 4 x^{7} + 8 x^{6} - 6 x^{5} + 115 x^{4} - 226 x^{3} + 2 x^{2} - 114 x + 3249$ & $J^3 = 19^{-12} \cdot 307^{4} \cdot 443^{8} \cdot 1427^{-8} \cdot 2287^{4} \cdot 6079^{-4} \cdot 7159^{-4} $ (l) \\
6\_24\_odd & $229$ & $1$ & $x^{8} - 4 x^{7} + 8 x^{6} - 6 x^{5} + 115 x^{4} - 226 x^{3} + 2 x^{2} - 114 x + 3249$ & $J^3 = 3^{8} \cdot 11^{-16} \cdot 19^{-4} \cdot 71^{-8} \cdot 919^{4} \cdot 1499^{-8} $ (l) \\
6\_24\_even & $257$ & $1$ & $x^{8} - 4 x^{7} + 8 x^{6} + 22 x^{5} + 185 x^{4} - 422 x^{3} + 450 x^{2} - 240 x + 64$ & $J^3 = 1 $ (l) \\
6\_24\_odd & $257$ & $1$ & $x^{8} - 4 x^{7} + 8 x^{6} + 22 x^{5} + 185 x^{4} - 422 x^{3} + 450 x^{2} - 240 x + 64$ & $J^3 = 1 $ (l) \\
6\_24\_even & $281$ & $3$ & $x^{8} + 63 x^{6} + 1823 x^{4} + 33861 x^{2} + 96100$ & $J = 2^{-120} \cdot 167^{8} \cdot 211^{-12} $ (l) \\
6\_24\_odd & $281$ & $3$ & $x^{8} + 63 x^{6} + 1823 x^{4} + 33861 x^{2} + 96100$ & $J = 2^{24} \cdot 4219^{4} $ (l) \\
6\_24\_even & $373$ & $1$ & $x^{8} - 4 x^{7} + 8 x^{6} + 26 x^{5} + 267 x^{4} - 594 x^{3} + 578 x^{2} - 714 x + 441$ & $J = 7^{4} \cdot 59^{-8} \cdot 103^{-8} \cdot 419^{-8} \cdot 571^{-4} \cdot 787^{-4} \cdot 3919^{-4} \cdot 10711^{-4} $ (l) \\
6\_24\_odd & $373$ & $1$ & $x^{8} - 4 x^{7} + 8 x^{6} + 26 x^{5} + 267 x^{4} - 594 x^{3} + 578 x^{2} - 714 x + 441$ & $J = 3^{8} \cdot 7^{-8} \cdot 31^{-8} \cdot 691^{-4} \cdot 2311^{-4} \cdot 6271^{-4} $ (l) \\
6\_24\_even & $389$ & $1$ & $x^{8} - 4 x^{7} + 8 x^{6} + 10 x^{5} + 203 x^{4} - 434 x^{3} + 162 x^{2} - 1386 x + 5929$ & $J = 7^{-12} \cdot 19^{4} $ (l) \\
6\_24\_odd & $389$ & $1$ & $x^{8} - 4 x^{7} + 8 x^{6} + 10 x^{5} + 203 x^{4} - 434 x^{3} + 162 x^{2} - 1386 x + 5929$ & $J = 7^{-8} \cdot 127^{8} \cdot 787^{-4} \cdot 6967^{-4} $ (l) \\
6\_24\_even & $421$ & $1$ & $x^{8} - 4 x^{7} + 8 x^{6} + 18 x^{5} + 247 x^{4} - 538 x^{3} + 338 x^{2} - 1638 x + 3969$ & $J = 7^{8} \cdot 103^{-4} \cdot 643^{4} \cdot 11071^{-4} $ (l) \\
6\_24\_odd & $421$ & $1$ & $x^{8} - 4 x^{7} + 8 x^{6} + 18 x^{5} + 247 x^{4} - 538 x^{3} + 338 x^{2} - 1638 x + 3969$ & $J = 3^{4} \cdot 79^{-8} \cdot 199^{4} $ (l) \\
6\_24\_even & $457$ & $1$ & $x^{8} - 4 x^{7} + 8 x^{6} - 2 x^{5} + 225 x^{4} - 454 x^{3} + 18 x^{2} - 672 x + 12544$ & $J = 2^{-40} \cdot 3^{-4} \cdot 19^{-4} $ (l) \\
6\_24\_odd & $457$ & $1$ & $x^{8} - 4 x^{7} + 8 x^{6} - 2 x^{5} + 225 x^{4} - 454 x^{3} + 18 x^{2} - 672 x + 12544$ & $J = 2^{8} \cdot 7^{-8} \cdot 19^{4} $ (l) \\
6\_24\_even & $557$ & $1$ & $x^{8} - 4 x^{7} + 8 x^{6} + 18 x^{5} + 315 x^{4} - 674 x^{3} + 338 x^{2} - 2522 x + 9409$ & $J = 43^{4} \cdot 139^{4} \cdot 467^{8} \cdot 3271^{-4} \cdot 13183^{4} \cdot 15679^{-4} $ (l) \\
6\_24\_odd & $557$ & $1$ & $x^{8} - 4 x^{7} + 8 x^{6} + 18 x^{5} + 315 x^{4} - 674 x^{3} + 338 x^{2} - 2522 x + 9409$ & $J = 67^{8} \cdot 8839^{-4} $ (l) \\
6\_24\_even & $577$ & $1$ & $x^{8} - 4 x^{7} + 8 x^{6} + 38 x^{5} + 465 x^{4} - 1014 x^{3} + 1058 x^{2} - 552 x + 144$ & $J^7 = 3^{-8} \cdot 19^{-8} \cdot 523^{4} \cdot 1123^{4} $ (l) \\
6\_24\_odd & $577$ & $1$ & $x^{8} - 4 x^{7} + 8 x^{6} + 38 x^{5} + 465 x^{4} - 1014 x^{3} + 1058 x^{2} - 552 x + 144$ & $J^7 = 3^{-4} \cdot 19^{4} $ (l) \\
6\_24\_even & $1069$ & $1$ & $x^{8} - 4 x^{7} + 8 x^{6} + 50 x^{5} + 843 x^{4} - 1794 x^{3} + 1682 x^{2} - 3306 x + 3249$ & $J = 3^{-4} \cdot 67^{-8} \cdot 71^{-8} \cdot 79^{-8} \cdot 331^{8} \cdot 12799^{-4} $ (l) \\
6\_24\_odd & $1069$ & $1$ & $x^{8} - 4 x^{7} + 8 x^{6} + 50 x^{5} + 843 x^{4} - 1794 x^{3} + 1682 x^{2} - 3306 x + 3249$ & $J = 3^{-8} \cdot 59^{-8} \cdot 79^{4} \cdot 379^{-4} \cdot 739^{4} \cdot 17839^{4} \cdot 19759^{4} \cdot 22039^{-4} $ (l) \\
6\_24\_even & $1249$ & $1$ & $x^{8} - 4 x^{7} + 8 x^{6} + 54 x^{5} + 985 x^{4} - 2086 x^{3} + 1922 x^{2} - 4464 x + 5184$ & $J = 2^{-72} \cdot 3^{12} \cdot 239^{-8} \cdot 503^{8} \cdot 1867^{-4} $ (l) \\
6\_24\_odd & $1249$ & $1$ & $x^{8} - 4 x^{7} + 8 x^{6} + 54 x^{5} + 985 x^{4} - 2086 x^{3} + 1922 x^{2} - 4464 x + 5184$ & $J = 2^{-56} \cdot 3^{-8} \cdot 31^{8} \cdot 499^{-4} $ (l) \\
6\_24\_odd & $1301$ & $1$ & $x^{8} - 4 x^{7} + 8 x^{6} + 42 x^{5} + 867 x^{4} - 1826 x^{3} + 1250 x^{2} - 8450 x + 28561$ & $J = 23^{8} \cdot 43^{4} \cdot 271^{-4} \cdot 607^{-4} \cdot 2131^{-4} \cdot 2683^{-4} \cdot 3331^{8} \cdot 25303^{-4} $ (l) \\
6\_24\_odd & $1549$ & $1$ & $x^{8} - 4 x^{7} + 8 x^{6} + 26 x^{5} + 855 x^{4} - 1770 x^{3} + 578 x^{2} - 10710 x + 99225$ & $J = 59^{16} \cdot 71^{-8} \cdot 179^{-8} \cdot 419^{-16} \cdot 1867^{-4} \cdot 3259^{-4} \cdot 3467^{-8} \cdot 8291^{8} \cdot 15679^{4} \cdot 40519^{4} $ (l) \\
6\_24\_even & $1585$ & $1$ & $x^{8} - 4 x^{7} + 8 x^{6} + 6 x^{5} + 793 x^{4} - 1606 x^{3} + 98 x^{2} - 5376 x + 147456$ & $J = 2^{32} \cdot 3^{-2} \cdot 11^{-4} \cdot 79^{4} \cdot 787^{2} $ (l) \\
6\_24\_odd & $1585$ & $1$ & $x^{8} - 4 x^{7} + 8 x^{6} + 6 x^{5} + 793 x^{4} - 1606 x^{3} + 98 x^{2} - 5376 x + 147456$ & $J = 3^{-2} \cdot 11^{-4} \cdot 59^{-4} \cdot 163^{2} \cdot 2683^{-2} \cdot 2803^{2} $ (l) \\
6\_24\_even & $1657$ & $1$ & $x^{8} - 4 x^{7} + 8 x^{6} + 62 x^{5} + 1305 x^{4} - 2742 x^{3} + 2450 x^{2} - 7560 x + 11664$ & $J = 2^{-8} \cdot 47^{-8} \cdot 107^{8} \cdot 739^{-4} \cdot 2203^{4} \cdot 3331^{4} $ (l) \\
6\_24\_odd & $1657$ & $1$ & $x^{8} - 4 x^{7} + 8 x^{6} + 62 x^{5} + 1305 x^{4} - 2742 x^{3} + 2450 x^{2} - 7560 x + 11664$ & $J = 2^{-8} \cdot 3^{8} \cdot 23^{8} \cdot 59^{8} \cdot 71^{-8} $ (l) \\
6\_24\_even & $1789$ & $1$ & $x^{8} - 4 x^{7} + 8 x^{6} + 74 x^{5} + 1575 x^{4} - 3306 x^{3} + 3362 x^{2} - 2214 x + 729$ & $J = 3^{-8} \cdot 7^{-24} \cdot 23^{-8} \cdot 127^{16} \cdot 1231^{4} \cdot 1327^{-12} \cdot 1459^{4} \cdot 2383^{12} \cdot 6967^{4} \cdot 7639^{4} \cdot 9491^{-8} \cdot 10739^{8} \cdot 27967^{-4} \cdot 43399^{-4} \cdot 47119^{-4} \cdot 56167^{4} \cdot 56359^{4} $ (l) \\
6\_24\_odd & $1889$ & $1$ & $x^{8} - 4 x^{7} + 8 x^{6} + 70 x^{5} + 1553 x^{4} - 3254 x^{3} + 3042 x^{2} - 7176 x + 8464$ & $J = 2^{88} \cdot 43^{-8} $ (l) \\
6\_24\_even & $1985$ & $1$ & $x^{8} - 4 x^{7} + 8 x^{6} + 54 x^{5} + 1353 x^{4} - 2822 x^{3} + 1922 x^{2} - 15872 x + 65536$ & $J = 2^{16} \cdot 11^{4} \cdot 131^{-4} $ (l) \\
6\_24\_odd & $1985$ & $1$ & $x^{8} - 4 x^{7} + 8 x^{6} + 54 x^{5} + 1353 x^{4} - 2822 x^{3} + 1922 x^{2} - 15872 x + 65536$ & $J = 2^{-48} \cdot 3067^{2} \cdot 3163^{2} $ (l) \\
6\_24\_even & $2593$ & $1$ & $x^{8} - 4 x^{7} + 8 x^{6} + 86 x^{5} + 2217 x^{4} - 4614 x^{3} + 4418 x^{2} - 9024 x + 9216$ & $J = 2^{40} \cdot 19^{4} \cdot 79^{8} \cdot 1171^{4} \cdot 3019^{-4} \cdot 3067^{4} \cdot 5227^{4} $ (l) \\
6\_24\_odd & $2593$ & $1$ & $x^{8} - 4 x^{7} + 8 x^{6} + 86 x^{5} + 2217 x^{4} - 4614 x^{3} + 4418 x^{2} - 9024 x + 9216$ & $J = 2^{-136} \cdot 3^{-4} \cdot 19^{4} \cdot 31^{-8} \cdot 67^{8} \cdot 103^{-8} \cdot 167^{8} \cdot 2131^{-4} $ (l) \\
\end{longtable}

}
\end{landscape}

\bibliographystyle{amsalpha}
\bibliography{refs}

\end{document}